\newtheorem{theorem}{Theorem}[section]
\newtheorem{proposition}[theorem]{Proposition}
\newtheorem{lemma}[theorem]{Lemma}
\newtheorem{definition}[theorem]{Definition}
\newtheorem{remark}[theorem]{Remark}
\def\cD{\mathcal{D}}
\def\cF{\mathcal{F}}
\def\cG{\mathcal{G}}
\def\cH{\mathcal{H}}
\def\cS{\mathcal{S}}
\def\bR{\mathbb{R}}
\def\bD{\mathbb{D}}
\def\conda{|\mathcal F_A}
\def\fa{\mathcal F_A}
\def\ia{\mathbf{1}_{A}}
\def\iac{\mathbf{1}_{A^c}}
\newcommand*\samethanks[1][\value{footnote}]{\footnotemark[#1]}
\begin{document}

\title{Intermittency for the Hyperbolic Anderson Model  \\
with rough noise in space}

\author{Raluca Balan\footnote{Department of Mathematics and Statistics, University of Ottawa,
585 King Edward Avenue, Ottawa, ON, K1N 6N5, Canada. E-mail address:
rbalan@uottawa.ca. Research supported by a grant from the Natural
Sciences and Engineering Research Council of Canada.} \and Maria
Jolis\thanks{Departament de Matem\`atiques, Universitat Aut\`{o}noma
de Barcelona, 08193 Bellaterra (Barcelona), Catalonia, Spain. E-mail
addresses: mjolis@mat.uab.cat, quer@mat.uab.cat. Research supported
by the grant MCI-FEDER MTM2012-33937.} \and Llu\'{i}s
Quer-Sardanyons \samethanks[2] \footnote{Corresponding author.}
 }

\date{April 20, 2016}

\maketitle

\begin{abstract}
\noindent In this article, we consider the stochastic wave equation
on the real line driven by a linear multiplicative Gaussian noise,
which is white in time and whose spatial correlation corresponds to
that of a fractional Brownian motion with Hurst index $H\in
(\frac14,\frac12)$. Initial data are assumed to be constant. First,
we prove that this equation has a unique solution (in the Skorohod
sense) and obtain an exponential upper bound for the $p$-th moment
of the solution, for any $p\geq 2$. Condition $H>\frac14$ turns out
to be necessary for the existence of solution. Secondly, we show
that this solution coincides with the one obtained by the authors in
a recent publication, in which the solution is interpreted in the
It\^o sense. Finally, we prove that the solution of the equation in
the Skorohod sense is weakly intermittent.
\end{abstract}

\bigskip
\bigskip
\bigskip

\noindent {\em MSC 2010:} Primary 60H15; 37H15
%60H15 s.p.d.e.
%37H15 Multiplicative ergodic theory, Lyapunov exponents

\bigskip

\noindent {\em Keywords:} stochastic partial differential equations; Malliavin calculus;
stochastic wave equation; intermittency

%%%%%%%%%%%%%%%

\newpage

\section{Introduction}

In this article, we continue the investigations from the recent article \cite{BJQ},
by focusing on the stochastic wave equation with constant initial conditions and a linear
term $\sigma(u)=\lambda u$ multiplying the noise:
\begin{equation}
\left\{\begin{array}{rcl} \displaystyle \frac{\partial^2 u}{\partial
t^2}(t,x) & = & \displaystyle \frac{\partial^2 u}{\partial x^2}(t,x)
+\lambda u(t,x)\dot{X}(t,x), \quad t>0, \, x \in \bR \\[2ex]
\displaystyle u(0,x) & = & \eta, \quad x \in \bR\\[1ex]
\displaystyle \frac{\partial u}{\partial t}(0,x) & = & 0, \quad x
\in \bR
\end{array}\right. \label{HAM}
\end{equation}
where $\lambda \in \bR$ and $\eta \in \bR$. To avoid trivial
situations, we assume that $\lambda\not=0$ and $\eta \not=0$.
This problem is known in the literature as the Hyperbolic Anderson Model, by analogy with its parabolic counterpart.

As in reference \cite{BJQ}, we assume that the noise $\dot{X}$ is white in
time and behaves in space like the formal derivative of a
fractional Brownian motion with Hurst index $H\in (\frac14,\frac
12)$. This noise is given by a zero-mean Gaussian process $X=\{X(\varphi);\varphi \in
\cD({\mathbb{R}_+}\times \bR)\}$, defined on a complete probability space
$(\Omega,\mathfrak{F},P)$, with covariance:
$$E[X(\varphi)X(\psi)]=\int_{0}^{\infty}\int_{\bR}\cF \varphi(t,\cdot)(\xi)\overline{\cF \psi(t,\cdot)
(\xi)}\mu(d\xi)dt, \quad \varphi, \psi\in \cD({\mathbb{R}_+}\times
\bR),$$
where $\cD({\mathbb{R}_+}\times
\bR)$ is the space of {infinitely differentiable functions on $\bR_{+} \times \bR$ with
compact support, and $\cF \varphi(t,\cdot)$ is the Fourier transform of the function $\varphi(t,\cdot)$, defined by:
$$\cF \varphi(t,\cdot)(\xi)=\int_{\bR} e^{-i \xi x}\varphi(t,x)dx, \quad \xi \in \bR.$$

We assume that the measure $\mu$ is given by $\mu(d\xi)=c_H
|\xi|^{1-2H}d \xi$, with $\frac14<H<\frac12$ and
$$c_H=\frac{\Gamma(2H+1)\sin(\pi H)}{2\pi}.$$

Since the Fourier transform of $\mu$ in the space $\cS'(\bR)$ of tempered distributions on $\bR$ is {\em not} a
locally integrable function, the noise $X$ is not of the same form as the one considered by Robert Dalang
in his seminal article \cite{Dalang}, and the stochastic integral with respect to $X$ was constructed in \cite{BJQ}
using different methods.

\smallskip

In the present article, the solution of equation \eqref{HAM} is
defined using the divergence operator from Malliavin calculus, as
opposed to the It\^o-type stochastic integral used in article
\cite{BJQ}. We say that the two solutions are interpreted in the
Skorohod sense, respectively the It\^o sense. Nevertheless, we will
show that the two solutions coincide, by extending to the case of
the noise $X$ a classical result from Malliavin calculus which says
that the Skorohod integral of a {\em measurable and adapted} process
with respect to the Brownian motion coincides with its It\^o
integral (see Section \ref{sec:ito} for details.)

\smallskip

In the first part of the paper, we show that equation
\eqref{HAM} has a unique solution (in the Skorohod sense),
%(see Definition \ref{definition-solution})
whose moments of order
$p\geq 2$ are bounded by an exponential
function of $t$, up to some constants; see Theorem
\ref{thm:existence} for the precise statement.
The proof of this result is based on Malliavin calculus techniques. More precisely, we write the Wiener chaos expansion of the solution $u(t,x)$ in the Skorohod sense, and we
estimate the second moment of each multiple Wiener integral which appear in this expansion, following very closely the method used in \cite{HHLNT} for the parabolic case.

\smallskip

This methodology has been used in the case of the stochastic heat
equation in \cite{hu01,hu-nualart09,HNS11,BT10}, and also in the
recent paper \cite{HHLNT}, in which the noise is the same as the one
considered in the present article. In fact, article \cite{HHLNT} contains a thorough analysis of the heat equation
with a general diffusion coefficient $\sigma(u)$ multiplying the noise.
On the other hand, the stochastic wave equation driven by a noise different than the one considered here
(either smoother in space or fractional in time) was studied in references \cite{DMT08,DM09,B12,balan-conus}.
Finally, the
stochastic wave and heat equations with an
affine diffusion coefficient $\sigma(u)=au+b$ and the same noise $X$ as here have been studied in
the recent article \cite{BJQ}, using the classical method of Picard iterations, the solution being interpreted in the It\^o sense. As explained above, it turns out that the solution to equation
\eqref{HAM} in the Skorohod sense coincides with the one obtained in
\cite{BJQ}. The advantage of the method based on the Wiener chaos expansion is that it allows us obtain estimates
for the $p$-th moments of the solution, which lead to the weak intermittency property of the solution.

\smallskip

Recently, there has been a lot of interest in
studying the intermittency property of solutions to stochastic partial differential equations, such as the
stochastic heat and wave equations. For the former, we refer the
reader to \cite{BC95,CK12,FK13,CD15,HHNT}. On the other hand,
intermittency for the solution of the stochastic wave equation driven by a noise
which is white in time and has a smoother space correlation than
the one considered here was studied in \cite{DM09,C13}.
Finally, in
\cite{balan-conus}, it was proved that the solutions to the stochastic wave and heat equations exhibit an
intermittency-type property, even when the noise is fractional in time and has the same spatial correlation
as in Dalang's article \cite{Dalang}.

\smallskip

The notion of intermittency which will be considered here is {\it weak intermittency}, which is
defined as follows. Recall that the {\it lower} and {\it upper
Lyapunov exponents} of order $p \geq 2$ of the random field $u=\{u(t,x);
t\geq 0, x\in \bR\}$ which solves equation \eqref{HAM} are defined, respectively, by:
\begin{equation}
 \underline{\gamma}(p):=\liminf_{t \to \infty}\frac{1}{t}\,\inf_{x\in\bR}\,\log E|u(t,x)|^p
 \label{def-Lyapunov}
\end{equation}
and
\begin{equation}
\overline{\gamma}(p):=\limsup_{t \to \infty}\frac{1}{t}\,\sup_{x\in\bR}\,\log E|u(t,x)|^p.
\label{def-Lyapunov-up}
\end{equation}
%\textcolor{red}{Due to the space homogeneity of the noise $\dot{X}$, the law of $u(t,x)$ does not depend on $x$, which justifies the notation $\overline{\gamma}(p)$ independent of $x$.}
%As it will be clear in the sequel, $E|u(t,x)|^2$ does not depend on
%$x$ because of the spatial homogeneity of our random perturbation
%$\dot{X}$. On the other hand our bounds for $E|u(t,x)|^p$ are also
%independent of $x$. Then,
The goal of the present article is to
prove that $u$ is {\it weakly intermittent}, which means that
$$\underline{\gamma}(2)>0 \quad \mbox{and} \quad
\overline{\gamma}(p)<\infty \quad \mbox{for all} \ p \geq 2.$$
The physical interpretation of this property is
that, as time becomes large, the paths of the random field $u$
exhibit very high peaks concentrated on small spatial islands.

\smallskip

We observe that, in most of the references cited above, a weaker notion
of {\it weak intermittence} has been considered, namely $\overline{\gamma}(2)>0$ and
$\overline{\gamma}(p)<+\infty$ for all $p\geq 2$.

\smallskip

As noticed in \cite{B12}, the fact that $\overline{\gamma}(p)<\infty$ for
all $p \geq 2$ is a direct consequence of the second moment estimate obtained
for the $n$-th term appearing in the Wiener chaos expansion of $u(t,x)$. Condition
$\underline{\gamma}(2)>0$ turns out to be more delicate and is proved by
showing that the second moment of the solution admits a lower exponential estimate.
Though our noise is
rougher than the one considered in some related references, e.g. \cite{FK13}, where
the above-mentioned weaker notion of {\it weak intermittency} is considered,
we succeeded to write a rather simplified proof of $\underline{\gamma}(2)>0$
by taking advantage of the
noise's roughness. More precisely, the fact
that $H<\frac12$ implies that the function $\xi \to \xi^{1-2H}$ is
increasing on $\bR_+$, which allowed us to find a suitable
lower bound for $\underline{\gamma}(2)$; see the proof of Theorem
\ref{thm:main} for details.

\smallskip

We notice that, though intermittency properties for the
parabolic Anderson model driven by the noise $X$ have already
been studied in \cite{HHLNT}, the proof of our Theorem \ref{thm:main} also works in this latter case;
see Remark \ref{rmk:01}.

\smallskip

Finally, we should mention that, as in \cite{BJQ} and \cite{HHLNT}, we were not able to
eliminate of the restriction $H>\frac14$, which is needed for the existence of the solution.
This restriction might have a deeper meaning, but from the technical point of view, it comes from the requirement
\[
 \int_\bR |\cF G(t,\cdot)(\xi)|^2 |\xi|^{2(1-2H)} d\xi<\infty,
\]
which arises naturally in our calculations.
Here
$G(t,x)=\frac12 {\bf 1}_{\{|x|<t\}}$, $t>0$ and $x\in \bR$, is
the fundamental solution of the wave equation in $\bR$.
Indeed, we prove that $H>\frac14$ is a necessary condition for the existence of the
solution to equation \eqref{HAM} with noise $X$ with $0<H<\frac12$ (see Proposition \ref{prop:optimality}).

\smallskip

The paper is organized as follows. In Section \ref{sec:Malliavin}, we
present the Malliavin calculus setting associated to our noise and
introduce the tools needed in the sequel. In Section \ref{sec:solution},
we prove that equation
\eqref{HAM} has a unique solution in the Skorohod sense. In Section \ref{sec:ito}, we
compare the solution obtained in the previous section with the
It\^o-type solution studied in \cite{BJQ}. An important step here
is to compare the Skorohod-type integral with respect to our
noise with the It\^o-type integral defined in the latter reference.
This result is of independent interest and is proved
in Appendix \ref{appendix}. Finally, in Section
\ref{sec:inter}, we prove that the solution to equation \eqref{HAM} is weakly intermittent,
in the sense described above.

\smallskip

Along the paper we use the notation $C$ for any positive real constant, independently of its value.

%%%%%%%%%%%%%%%%%%%%%%%%%%%%%%%%%%%%%

\section{Preliminaries on Malliavin calculus}
\label{sec:Malliavin}

In order to give a meaning of solution to equation \eqref{HAM}, we
use an approach based on Malliavin calculus with respect to the
isonormal Gaussian process determined by the noise $\dot{X}$. We
describe briefly this procedure. We refer to \cite{nualart06} for
more details.

Let $\cH$ be the completion of $\cD({\mathbb{R}_+}\times \bR)$ with
respect to $\langle \cdot,\cdot \rangle_{\cH}$, where
\[
\langle \varphi,\psi \rangle_{\cH}:= \int_{0}^{\infty}\int_{\bR}\cF
\varphi(t,\cdot)(\xi)\overline{\cF \psi(t,\cdot) (\xi)}\mu(d\xi)dt.
\]
The map $\varphi \mapsto X(\varphi) \in L^2(\Omega)$ is an isometry
which can be extended to $\cH$. We denote this map by
$$X(\varphi)=\int_{0}^{\infty}\int_{\bR}\varphi(t,x)X(dt,dx), \quad \varphi \in \cH.$$
We say that $X(\varphi)$ is the Wiener integral of $\varphi$ with
respect to $X$. Then, $\{X(\varphi);\varphi \in \cH\}$ defines an
isonormal Gaussian process and we can develop the Malliavin calculus
techniques based on it.

Recall that the square of the norm in $\cH$ can also be written as
follows:
$$
\|\varphi\|_{\cH}^2= C_H \int_0^{\infty} \int_{\bR}
\int_{\bR}|\varphi(x)-\varphi(y)|^2|x-y|^{2-2H} \,dxdydt,$$ where
$C_H=H(1-2H)/2$.

Moreover, it can be seen that $\cH$ is a space of functions in both variables $t$ and $x$; see
\cite{Jo} for the proof of this fact, in the case when the noise is independent of $t$.

Let $\cG$ be the $\sigma$-field generated by $\{X(\varphi);\varphi
\in \cH\}$. By Theorem 1.1.1 of \cite{nualart06}, every random
variable $F \in L^2(\Omega,\cG,P)$ has the Wiener chaos expansion:
$$F=E(F)+\sum_{n \geq 1}F_n \quad \mbox{with} \quad F_n \in \cH_n,$$
where $\cH_n$ is the $n$-th Wiener chaos space associated to $X$.

Each random variable $F \in \cH_n$ can be represented as $F=I_n(f)$
for some $f \in \cH^{\otimes n}$, where $\cH^{\otimes n}$ is the
$n$-th tensor product of $\cH$ and $I_n:\cH^{\otimes n} \to \cH_n$
is the multiple Wiener integral with respect to $X$ (see, e.g.
\cite[Prop. 1.1.4]{nualart06}). In our case, the norm in
$\cH^{\otimes n}$ is given by:
\begin{equation}
\label{def-norm-Hn} \|f\|_{\cH^{\otimes n}}^2 =
\int_{\bR_{+}^n}\int_{\bR^n}|\cF
f(t_1,\cdot,\ldots,t_n,\cdot)(\xi_1, \ldots,\xi_n)|^2 \mu(d\xi_1)
\ldots \mu(d\xi_n) dt_1 \ldots dt_n.
\end{equation}

For any $f \in \cH^{\otimes n}$,
\begin{equation}
\label{int-sym-equal} I_n(f)=I_n(\widetilde{f})
\end{equation} and
$$E|I_n(f)|^2=E|I_n(\widetilde{f})|^2=n! \, \|\widetilde{f}\|_{\cH^{\otimes n}}^2,$$
where $\widetilde{f}$ is the symmetrization of $f$ in all $n$
variables:
$$\widetilde{f}(t_1,x_1,\ldots,t_n,x_n)=\frac{1}{n!}\sum_{\rho \in S_n}f(t_{\rho(1)},x_{\rho(1)},\ldots,t_{\rho(n)},x_{\rho(n)}),$$
and we denote by $S_n$ the set of all permutations of $\{1,
\ldots,n\}$. By the orthogonality of the Wiener chaos spaces, for
any $f \in \cH_n$ and $g \in \cH_m$,
\begin{equation}
\label{orthog-Hn} E[I_n(f)I_m(g)]=\left\{
\begin{array}{ll} n! \, \langle \widetilde{f}, \widetilde{g} \rangle_{\cH^{\otimes n}} & \mbox{if $n=m$} \\
0 & \mbox{if $n \not=m$}
\end{array} \right.
\end{equation}
The Wiener chaos expansion of $F \in L^2(\Omega,\cG,P)$ becomes:
$$F=\sum_{n \geq 0}I_n(f_n)=\sum_{n \geq 0}I_n(\widetilde{f}_n),$$
where $f_n \in \cH^{\otimes n}$ for $n\geq 1$,
$f_0=\widetilde{f}_0=E(F)$ and $I_0:\bR \to \bR$ is the identity
map. Using again the orthogonality of the Wiener chaos spaces, we
obtain that
$$E|F|^2=\sum_{n \geq 0}E|I_n(f_n)|^2=\sum_{n \geq 0}n! \, \|\widetilde{f}_n\|_{\cH^{\otimes n}}^{2}.$$

Let $\cS$ be the class of smooth random variables of the form
\begin{equation}
\label{form-F}F=f(X(\varphi_1),\ldots, X(\varphi_n)),
\end{equation} where $f \in C_{b}^{\infty}(\bR^n)$, $\varphi_i \in \cH$, $n \geq 1$, and
$C_b^{\infty}(\bR^n)$ is the class of bounded $C^{\infty}$-functions
on $\bR^n$, whose partial derivatives of all orders are bounded. The
{\bf Malliavin derivative} of $F$ of the form (\ref{form-F}) is an
$\cH$-valued random variable given by:
$$DF:=\sum_{i=1}^{n}\frac{\partial f}{\partial x_i}(X(\varphi_1),\ldots,
X(\varphi_n))\varphi_i.$$ We endow $\cS$ with the norm
$\|F\|_{\bD^{1,2}}:=(E|F|^2)^{1/2}+(E\|D F \|_{\cH}^{2})^{1/2}$. The
operator $D$ can be extended to the space $\bD^{1,2}$, the
completion of $\cS$ with respect to $\|\cdot \|_{\bD^{1,2}}$.

The {\bf divergence operator} $\delta$ is defined as the adjoint of
the operator $D$. The domain of $\delta$, denoted by $\mbox{Dom} \
\delta$, is the set of $u \in L^2(\Omega;\cH)$ such that
$$|E \langle DF,u \rangle_{\cH}| \leq c (E|F|^2)^{1/2}, \quad \forall F \in \bD^{1,2},$$
where $c$ is a constant depending on $u$. If $u \in {\rm Dom} \
\delta$, then $\delta(u)$ is the element of $L^2(\Omega)$
characterized by the following duality relation:
\begin{equation}
\label{duality} E(F \delta(u))=E\langle DF,u \rangle_{\cH}, \quad
\forall F \in \bD^{1,2}.
 \end{equation}
In particular, $E[\delta(u)]=0$. If $u \in \mbox{Dom} \ \delta$, we
use the notation
$$\delta(u)=\int_0^{\infty} \int_{\bR^d}u(t,x) X(\delta t, \delta x),$$
and we say that $\delta(u)$ is the {\bf Skorohod integral} of $u$
with respect to $X$.

\bigskip

The following result is the analogue of Proposition 1.3.7 of
\cite{nualart06} for the noise $X$.

\begin{proposition}
\label{integr-criterion} Let $u\in L^2(\Omega;\cH)$ such that for
each $t>0$ and $x \in \bR$, $u(t,x) \in L^2(\Omega,\cG,P)$ has the
Wiener chaos expansion
$$
u(t,x)=\sum_{n \geq 0}I_n(f_n(\cdot,t,x)),$$
 for some $f_n(\cdot,t,x)\in \cH^{\otimes n}$. Then $u \in {\rm Dom} \ \delta$ if and only if the series $\sum_{n \geq 1}I_{n+1}(f_n)$ converges in $L^2(\Omega)$. In this case,
$$\delta(u)=\sum_{n \geq 0}I_{n+1}(f_n)=\sum_{n \geq 0}I_{n+1}(\widetilde{f}_n),$$
where $\widetilde{f}_n$ is the symmetrization of $f_n$ in all $n+1$
variables.
\end{proposition}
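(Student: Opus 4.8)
The argument is a transcription to the isonormal Gaussian process $\{X(\varphi);\varphi\in\cH\}$ of the classical proof of Proposition 1.3.7 in \cite{nualart06}; what makes a separate statement worthwhile is that $\cH$ is an abstract completion rather than an $L^2$-space, so that the very meaning of "$f_n\in\cH^{\otimes n}$ as a function of $(t,x)$" has to be handled with care. The plan is therefore: first, record the two standard facts about the isonormal process over a separable Hilbert space that we will use; second, verify the duality relation \eqref{duality} on a dense subclass of $\bD^{1,2}$; third, deduce both implications by an orthogonality computation.

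For the first step, two facts will be invoked. (a) Finite linear combinations of multiple integrals $I_m(g)$ with $g\in\cH^{\otimes m}$ symmetric, $m\ge0$, are dense in $\bD^{1,2}$ (truncations of the chaos expansion converge in the $\bD^{1,2}$-norm). (b) For such $g$ one has $DI_m(g)=m\,I_{m-1}(g(\cdot,\star))$ as an element of $L^2(\Omega;\cH)$, meaning $\langle DI_m(g),h\rangle_\cH=m\,I_{m-1}(g^{h})$ for every $h\in\cH$, where $g^{h}\in\cH^{\otimes(m-1)}$ is obtained by pairing the last slot of $g$ against $h$. Both hold for any isonormal Gaussian process on a separable Hilbert space, hence for $X$ over $\cH$. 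In addition, the hypothesis identifies $f_n$, a priori a map $(t,x)\mapsto f_n(\cdot,t,x)\in\cH^{\otimes n}$, with an element of $\cH^{\otimes(n+1)}\cong\cH^{\otimes n}\otimes\cH$; this is exactly where one uses that $\cH$ is a genuine space of functions in $(t,x)$ (see \cite{Jo}), so that the full symmetrization $\widetilde f_n$ in the $n+1$ variables is well defined and $I_{n+1}(f_n)=I_{n+1}(\widetilde f_n)$ by \eqref{int-sym-equal}.

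Now fix $m\ge1$ and $F=I_m(g)$ with $g\in\cH^{\otimes m}$ symmetric. Expanding $u=\sum_{n\ge0}I_n(f_n(\cdot,t,x))$, applying the derivative formula (b), and using the orthogonality of the Wiener chaoses so that only the term $n=m-1$ survives, a short contraction (pairing the designated slot of $g$ against the $\cH$-valued component of $u$) gives
\[
E\langle DF,u\rangle_\cH \;=\; m\,(m-1)!\,\langle g,\widetilde f_{m-1}\rangle_{\cH^{\otimes m}} \;=\; m!\,\langle g,\widetilde f_{m-1}\rangle_{\cH^{\otimes m}} .
\]
If $G:=\sum_{n\ge0}I_{n+1}(\widetilde f_n)$ converges in $L^2(\Omega)$, then by \eqref{orthog-Hn} one has $E(FG)=E\big(I_m(g)I_m(\widetilde f_{m-1})\big)=m!\,\langle g,\widetilde f_{m-1}\rangle_{\cH^{\otimes m}}$, and for $m=0$ both $E\langle DF,u\rangle_\cH$ and $E(FG)$ vanish. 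Since $F\mapsto E\langle DF,u\rangle_\cH$ and $F\mapsto E(FG)$ are continuous on $\bD^{1,2}$ — the former because $|E\langle DF,u\rangle_\cH|\le\|F\|_{\bD^{1,2}}\|u\|_{L^2(\Omega;\cH)}$, the latter because $|E(FG)|\le\|G\|_{L^2(\Omega)}(E|F|^2)^{1/2}$ — and agree on the dense subclass from (a), they agree on all of $\bD^{1,2}$; the bound $|E\langle DF,u\rangle_\cH|=|E(FG)|\le\|G\|_{L^2(\Omega)}(E|F|^2)^{1/2}$ then shows $u\in{\rm Dom}\,\delta$ with $\delta(u)=G=\sum_{n\ge0}I_{n+1}(f_n)$, the last identity by \eqref{int-sym-equal}. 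Conversely, if $u\in{\rm Dom}\,\delta$, write $\delta(u)=\sum_{m\ge0}I_m(h_m)$ with $h_m$ symmetric; testing \eqref{duality} against $F=I_m(g)$ and using the displayed formula yields $\langle h_m,g\rangle_{\cH^{\otimes m}}=\langle \widetilde f_{m-1},g\rangle_{\cH^{\otimes m}}$ for every symmetric $g$, hence $h_m=\widetilde f_{m-1}$ for $m\ge1$ and $h_0=E[\delta(u)]=0$; thus $\sum_{n\ge0}I_{n+1}(\widetilde f_n)=\delta(u)$ converges in $L^2(\Omega)$, and so does $\sum_{n\ge0}I_{n+1}(f_n)$.

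The only genuinely delicate point — and the one I expect to require the most care in the write-up — is the tensor-product identification of the kernels $f_n$ as elements of $\cH^{\otimes(n+1)}$ together with the well-posedness of their full symmetrization, precisely because $\cH$ is an abstract completion and not an explicit $L^2$-space; everything else is a routine adaptation of the Brownian case that relies only on the separable Hilbert-space structure of $\cH$ and on \eqref{int-sym-equal} and \eqref{orthog-Hn}.
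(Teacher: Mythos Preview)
The paper does not supply a proof of this proposition; it merely records the statement as ``the analogue of Proposition~1.3.7 of \cite{nualart06} for the noise $X$'' and moves on. Your proposal is precisely the transcription of that classical proof to the present isonormal process, carried out correctly, and your emphasis on the one nontrivial point --- that $\cH$ is an abstract completion, so the identification of $f_n$ with an element of $\cH^{\otimes(n+1)}$ and the full symmetrization $\widetilde f_n$ need the fact that $\cH$ is a genuine function space (cf.\ \cite{Jo}) --- is exactly the issue the paper glosses over; there is nothing to add.
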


%%%%%%%%%%%%%%%%%%%

\section{Solution in the Skorohod sense}
\label{sec:solution}

In this section, we define the concept of solution to equation \eqref{HAM} in the Skorohod sense,
and we prove that this solution exists and is unique. As a by-product of this procedure,
we obtain immediately an exponential upper bound for the $p$-th moment of the solution,
which means that $\overline{\gamma}(p)<\infty$ for any $p \geq 2$.
Moreover, we also prove that condition $H>\frac14$ is necessary for the existence and
uniqueness of solution to equation \eqref{HAM}.

\medskip

Let $G$ be the Green function of the wave operator on $\bR_{+}
\times \bR$, i.e.
$$G(t,x)=\frac{1}{2}1_{\{|x|<t\}}, \quad t>0,x \in \bR.$$
We consider the filtration
$$\cF_t=\sigma (\{X(1_{[0,s]}\varphi);0 \leq s \leq t,\varphi \in \cD(\bR)\}) \vee {\cal N}, \quad t \geq 0,$$
where ${\cal N}=\{F \in \mathfrak{F}; P(F)=0 \}$.

\begin{definition}
\label{definition-solution} {\rm We say that a process $u=\{u(t,x);t
\geq 0, x \in \bR^d\}$ is a {\bf solution of \eqref{HAM} (in the
Skorohod sense)} if, for any $t \geq 0$ and $x \in \bR$,  $E|u(t,x)|^2<\infty$ and
\begin{equation}
\label{def-sol} u(t,x) = \eta + \int_{0}^{t}\int_{\bR}G(t-s,x-y)
\lambda u(s,y)X(\delta s,\delta y),
\end{equation}
i.e. the process $v^{(t,x)}= \{1_{[0,t]}(s)\lambda G(t-s,x-y)
u(s,y); s \geq 0,y \in \bR\}$ belongs to ${\rm Dom} \ \delta$ and
$u(t,x)=\eta+\delta(v^{(t,x)})$. }
\end{definition}

To see when the solution exists, for any $t>0$ and $x \in \bR$, we
define $f_0(t,x)=\eta$, and
\begin{align}
& f_n(t_1,x_1, \ldots,t_n,x_n,t,x) \nonumber \\
& \qquad \qquad =\lambda^n  G(t-t_n,x-x_n) \ldots
G(t_2-t_1,x_2-x_1)\, \eta 1_{\{0<t_1<\ldots<t_n<t\}}, \label{def-fn}
\end{align}
for $n \geq 1$. We let $\widetilde{f}_0(t,x)=\eta$ and for $n\geq
1$, we let $\widetilde{f}_n(\cdot,t,x)$ be the symmetrization of
$f_n(\cdot,t,x)$:
\begin{align}
& \widetilde{f}_n(t_1,x_1,\ldots,t_n,t,x) \nonumber \\
&  \quad =\eta \frac{\lambda^n}{n!}\sum_{\rho \in S_n}
G(t-t_{\rho(n)},x-x_{\rho(n)}) \ldots
G(t_{\rho(2)}-t_{\rho(1)},x_{\rho(2)}-x_{\rho(1)})
1_{\{0<t_{\rho(1)}<\ldots<t_{\rho(n)}<t\}}. \label{def-fn-sym}
\end{align}
The main result of the section (see Theorem \ref{thm:existence}
below) is based on the following proposition.

\begin{proposition}
\label{exist-sol-th} Equation \eqref{HAM} has a solution if and only
if, for any $t>0$ and $x \in \bR$, it holds
\begin{equation}
\label{series-conv} \sum_{n \geq 0}n!\,
\|\widetilde{f}_n(\cdot,t,x)\|_{\cH^{\otimes n}}^{2}<\infty.
\end{equation}
In this case, the solution $u=\{u(t,x);t \geq 0,x \in \bR\}$ is
unique and has the Wiener chaos expansion:
\begin{equation}
\label{chaos-exp} u(t,x)=\sum_{n\geq 0}I_n(f_n(\cdot,t,x)),
\end{equation}
with kernels $f_n(\cdot,t,x), n \geq 1$ given by \eqref{def-fn} and
$f_0(t,x)=\eta$. Moreover, for any $t \geq 0$ and $x \in \bR$,
$$E|u(t,x)|^2=\sum_{n \geq 0}n!\, \|\widetilde{f}_n(\cdot,t,x)\|_{\cH^{\otimes n}}^{2}.$$
\end{proposition}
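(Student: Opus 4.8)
The plan is to follow the standard Picard-iteration-free route: show that if a solution exists in the sense of Definition \ref{definition-solution}, then its Wiener chaos kernels are forced to be exactly the $f_n$ in \eqref{def-fn}, and conversely that the candidate process built from these kernels is a genuine solution precisely when the series \eqref{series-conv} converges. For the forward direction I would start from an arbitrary solution $u(t,x)=\sum_{n\ge0}I_n(g_n(\cdot,t,x))$ with $g_n(\cdot,t,x)\in\cH^{\otimes n}$, and feed it into the right-hand side of \eqref{def-sol}. The process $v^{(t,x)}(s,y)=\mathbf 1_{[0,t]}(s)\lambda G(t-s,x-y)u(s,y)$ has chaos expansion $\sum_{n\ge0}I_n\big(\mathbf 1_{[0,t]}(s)\lambda G(t-s,x-y)g_n(\cdot,s,y)\big)$ in the first $n$ variables, so by Proposition \ref{integr-criterion} its Skorohod integral is $\sum_{n\ge0}I_{n+1}(h_{n+1})$ where
\[
h_{n+1}(t_1,x_1,\dots,t_{n+1},x_{n+1},t,x)=\lambda\, G(t-t_{n+1},x-x_{n+1})\, g_n(t_1,x_1,\dots,t_n,x_n,t_{n+1},x_{n+1}).
\]
Matching chaos-by-chaos in the identity $u(t,x)=\eta+\delta(v^{(t,x)})$ gives $g_0(t,x)=\eta$ and the recursion $g_{n+1}(\cdot,t,x)=\lambda G(t-t_{n+1},x-x_{n+1})g_n(\cdot,t_{n+1},x_{n+1})$, which unwinds to exactly \eqref{def-fn}. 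This simultaneously proves uniqueness.

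For the converse, I would define $u(t,x):=\sum_{n\ge0}I_n(f_n(\cdot,t,x))$ with the $f_n$ of \eqref{def-fn}. Convergence of this series in $L^2(\Omega)$ is, by the isometry \eqref{orthog-Hn} and \eqref{int-sym-equal}, exactly the hypothesis \eqref{series-conv}, and it gives $E|u(t,x)|^2=\sum_{n\ge0}n!\,\|\widetilde f_n(\cdot,t,x)\|_{\cH^{\otimes n}}^2<\infty$. It remains to check that $v^{(t,x)}\in\mathrm{Dom}\,\delta$ and $\delta(v^{(t,x)})=u(t,x)-\eta$. By Proposition \ref{integr-criterion} this amounts to showing that $\sum_{n\ge0}I_{n+1}(f_{n+1}(\cdot,t,x))$ converges in $L^2(\Omega)$, where I have used the key structural identity $\mathbf 1_{[0,t]}(s)\lambda G(t-s,x-y)f_n(\cdot,s,y)=f_{n+1}(\cdot,s,y,t,x)$ — immediate from the definition \eqref{def-fn}. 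But that series is $\sum_{n\ge1}I_n(f_n(\cdot,t,x))=u(t,x)-\eta$, whose convergence is again guaranteed by \eqref{series-conv}. Hence $u$ solves \eqref{HAM}, and its square moment has the stated form.

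The one point requiring a little care — and what I expect to be the main obstacle — is the interchange between the Skorohod/divergence operator and the infinite chaos sum, i.e.\ rigorously justifying that $\delta\big(\sum_n I_n(\cdot)\big)=\sum_n\delta(I_n(\cdot))$ and that the process $v^{(t,x)}$ genuinely lies in $L^2(\Omega;\cH)$ and in $\mathrm{Dom}\,\delta$. This is precisely the content of Proposition \ref{integr-criterion}, so the honest work is to verify its hypotheses: that $v^{(t,x)}\in L^2(\Omega;\cH)$, which requires a bound on $\sum_n \|$ (the $\cH$-norm in $(s,y)$ of $I_n(f_n(\cdot,s,y))$'s kernel, tensored with $\lambda G(t-s,x-y)$) $\|$, and this bound should itself reduce to a multiple of \eqref{series-conv} because convolving with $G$ and localizing to $[0,t]$ only adds one more integration variable of the same type as those already present in $\widetilde f_{n+1}$. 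Once $v^{(t,x)}\in L^2(\Omega;\cH)$ is in hand, Proposition \ref{integr-criterion} does the rest mechanically. I would also remark that the symmetrization subtlety in Proposition \ref{integr-criterion} (symmetrizing $f_n$ over $n+1$ variables) is harmless here since $I_{n+1}(f_{n+1})=I_{n+1}(\widetilde f_{n+1})$ by \eqref{int-sym-equal}, so the two forms of the chaos expansion in the statement agree.
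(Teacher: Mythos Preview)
Your approach is essentially the same as the paper's: derive a recursion on the chaos kernels by applying Proposition \ref{integr-criterion} to $v^{(t,x)}$ and matching both sides of $u(t,x)=\eta+\delta(v^{(t,x)})$, solve the recursion to obtain \eqref{def-fn}, and observe that convergence of the resulting chaos series in $L^2(\Omega)$ is exactly \eqref{series-conv}. The one place the paper is slightly more careful is the chaos-by-chaos matching, which must be carried out at the level of \emph{symmetric} kernels (uniqueness of the Wiener chaos expansion only determines $\widetilde g_n$), so the recursion is really $\widetilde g_{n+1}(\cdot,t,x)=\big(\lambda\, G(t-t_{n+1},x-x_{n+1})\,\widetilde g_n(\cdot,t_{n+1},x_{n+1})\big)^{\sim}$ and unwinds to \eqref{def-fn-sym}; your version as written asserts the recursion for non-symmetric representatives, which is not literally what the matching yields, but after this cosmetic fix the argument is identical.
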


\begin{proof} We use the same argument as on p. 302-303 of \cite{hu-nualart09} for the Parabolic Anderson Model (with a noise different than here).
Assume that a solution $u=\{u(t,x)\}$ to equation \eqref{HAM}
exists. Since $u(t,x)\in L^2(\Omega,\cG,P)$, it has the Wiener chaos
expansion \eqref{chaos-exp} for some $f_n(\cdot,t,x) \in
\cH^{\otimes n}$ for $n \geq 1$ and $f_0(t,x)=\eta$.

We fix $t>0$ and $x \in \bR$ and we write the Wiener chaos expansion
for the variable $u(s,y)$ for $s \in [0,t]$ and $y \in \bR$. We
multiply this by the deterministic function $1_{[0,t]}(s)\lambda
G(t-s-x-y)$. It follows that the process $v^{(t,x)}$ given in
Definition \ref{definition-solution} has the Wiener chaos expansion
$v^{(t,x)}(s,y)=\sum_{n \geq 0}I_{n}(g_n^{(t,x)}(\cdot,s,y))$, with
kernels:
\begin{equation}
\label{def-gn} g_{n}^{(t,x)}(\cdot,s,y)=1_{[0,t]}(s)\lambda
G(t-s,x-y)\widetilde{f}_n(\cdot,s,y),
\end{equation}
being $\widetilde{f}_n(\cdot,s,y)$ the symmetrization of
$f_n(\cdot,s,y)$ in the first $n$ variables.

 By Proposition \ref{integr-criterion},
$v^{(t,x)} \in {\rm Dom} \ \delta$ if and only if $\sum_{n \geq
0}I_{n+1}(g_n^{(t,x)})$ converges in $L^2(\Omega)$. In this case,
$\delta(v^{(t,x)})=\sum_{n \geq 0}I_{n+1}(g_{n}^{(t,x)})$ and
relation $u(t,x)=\eta+\delta(v^{(t,x)})$ becomes:
\begin{equation}
\label{chaos-eq} \sum_{n \geq 0}I_n(f_n(\cdot,t,x))=\eta+\sum_{n
\geq 0}I_{n+1}(g_n^{(t,x)}).
\end{equation}

We denote  by $\widetilde{g_n^{(t,x)}}$ the symmetrization of $g$ in
all $n+1$ variables, i.e.
\begin{align*}
& \widetilde{g_n^{(t,x)}}(t_1,x_1,\ldots,t_n,x_n,s,y) \\
& \qquad =
\frac{1}{n+1}\Big[
g_n^{(t,x)}(t_1,x_1,\ldots,t_n,x_n,s,y)  \\
& \quad \quad \quad +\sum_{i=1}^{n}
g_n^{(t,x)}(t_1,x_1,\ldots,t_{i-1},x_{i-1},s,y,t_{i+1},x_{i+1},\ldots,t_n,x_n,t_i,x_i)
\Big]
\end{align*}
Relation \eqref{chaos-eq} can be written also as:
$$\eta+\sum_{n\geq 0}I_{n+1}(\widetilde{f}_{n+1}(\cdot,t,x))=\eta+\sum_{n\geq 0}I_{n}(\widetilde{g_n^{(t,x)}}).$$
By the uniqueness of the Wiener chaos expansion with symmetric
kernels (see Theorem 1.1.2 of \cite{nualart06}), we infer that for
any $n \geq 0$,
$$\widetilde{f}_{n+1}(\cdot,t,x)=\widetilde{g_n^{(t,x)}},$$
that is
$$\widetilde{f}_{n+1}(t_1,x_1,\ldots,t_n,x_n,t_{n+1},x_{n+1},t,x)=
\widetilde{g_n^{(t,x)}}(t_1,x_1,\ldots,t_n,x_n,t_{n+1},x_{n+1}).$$
This allows us to find $\widetilde{f}_n(\cdot,t,x)$ recursively:
\begin{eqnarray*}
\widetilde{f}_1(t_1,x_1,t,x)&=&\widetilde{g_0^{(t,x)}}(t_1,x_1)=1_{[0,t]}(t_1)
\lambda G(t-t_1,x-x_1)\eta \\
\widetilde{f}_2(t_1,x_1,t_2,x_2,t,x)&=&\widetilde{g_1^{(t,x)}}(t_1,x_1,t_2,x_2)\\
&=& \frac{1}{2}\left[g_1^{(t,x)}(t_1,x_1,t_2,x_2)+g_1^{(t,x)}(t_2,x_2,t_1,x_1)\right] \\
&=& \frac{1}{2}[1_{[0,t]}(t_2)\lambda
G(t-t_2,x-x_2)f_1(t_1,x_1,t_2,x_2)+\\
& & 1_{[0,t]}(t_1) \lambda
G(t-t_1,x-x_1f_1(t_2,x_2,t_1,x_1) ]\\
&=& \frac{\lambda^2}{2}[1_{[0,t]}(t_2)
G(t-t_2,x-x_2) 1_{[0,t_2]}(t_1)G(t_2-t_1,x_2-x_1)\eta+\\
& & 1_{[0,t]}(t_1) G(t-t_1,x-x_1)
1_{[0,t_1]}(t_2)G(t_1-t_2,x_1-x_2)\eta],
\end{eqnarray*}
and so on. This shows that the kernels $\widetilde{f}_n(\cdot,t,x)$ must be of
the form \eqref{def-fn-sym}.

The series $\sum_{n \geq 0}I_{n+1}(g_n^{(t,x)})=\sum_{n \geq
0}I_{n+1}(f_{n+1}(\cdot,t,x))$ converges in $L^2(\Omega)$ if and
only if \eqref{series-conv} holds. In this case, the solution
$u$ exists and is unique, with the Wiener chaos expansion
\eqref{chaos-exp} with kernels $f_n(\cdot,t,x)$ given by
\eqref{def-fn}.
\end{proof}

\medskip

We will also need the following technical result which follows from
Lemma 4.5 of \cite{HHNT} using the change of variable
$s_j=t-t_{n+1-j}$ for $j=1, \ldots,n$.

\begin{lemma}\label{lem:aux}
Let $T_n(t)=\{(t_1,\ldots,t_n);0<t_1<\ldots<t_n<t\}$ for any
$t>0$ and $n \geq 1$. Then, for any $\beta_1, \ldots,\beta_n>-1$, we have:
$$I_n(t,\beta_1, \ldots,\beta_n):=\int_{T_n(t)}\prod_{j=1}^{n}(t_{j+1}-t_j)^{\beta_j}dt_1 \ldots dt_n=\frac{\prod_{j=1}^n \Gamma(\beta_j+1)}{\Gamma(|\beta|+n+1)}t^{|\beta|+n},$$
where $|\beta|=\sum_{j=1}^n \beta_j$ and we denote $t_{n+1}=t$. Consequently, if
there exist $M>\varepsilon>0$ such that $\varepsilon \leq \beta_j+1
\leq M$ for all $j=1, \ldots,n$, then
$$I_n(t,\beta_1, \ldots,\beta_n)\leq \frac{C^n}{\Gamma(|\beta|+n+1)}t^{|\beta|+n},$$
where $C=\sup_{x \in [\varepsilon,M]}\Gamma(x)$.
\end{lemma}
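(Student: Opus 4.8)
The plan is to prove the identity directly by induction on $n$, integrating out the time variables one at a time starting from $t_1$, so that each step reduces to a Beta integral. (An alternative is to pass to the gap variables $u_0=t_1$, $u_j=t_{j+1}-t_j$ for $1\le j\le n$, which satisfy $u_j>0$ and $\sum_{j=0}^n u_j=t$, and to recognize the resulting integral over the simplex as a Dirichlet integral with parameters $a_0=1$, $a_j=\beta_j+1$; after rescaling $u_j=tv_j$ this yields the stated value together with the factor $t^{|\beta|+n}$. I will describe the inductive route, which is the most elementary, and which matches the reduction to Lemma 4.5 of \cite{HHNT} mentioned in the statement.)

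The precise inductive claim is: for every $k\ge 1$ and every $t_{k+1}>0$,
\[
\int_{\{0<t_1<\ldots<t_k<t_{k+1}\}}\prod_{j=1}^{k}(t_{j+1}-t_j)^{\beta_j}\,dt_1\ldots dt_k
=\frac{\prod_{j=1}^{k}\Gamma(\beta_j+1)}{\Gamma(S_k+k+1)}\,t_{k+1}^{\,S_k+k},
\qquad S_k:=\sum_{j=1}^{k}\beta_j .
\]
The base case $k=1$ is just $\int_0^{t_2}(t_2-t_1)^{\beta_1}\,dt_1=\frac{t_2^{\beta_1+1}}{\beta_1+1}=\frac{\Gamma(\beta_1+1)}{\Gamma(\beta_1+2)}t_2^{\beta_1+1}$. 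For the inductive step, I write the $(k+1)$-fold integral as the $k$-fold one (with $t_{k+1}$ as the free upper endpoint) and then integrate $t_{k+1}\in(0,t_{k+2})$, using the Beta integral
\[
\int_0^{t_{k+2}} s^{\,S_k+k}\,(t_{k+2}-s)^{\beta_{k+1}}\,ds
= t_{k+2}^{\,S_{k+1}+k+1}\,\frac{\Gamma(S_k+k+1)\,\Gamma(\beta_{k+1}+1)}{\Gamma(S_{k+1}+k+2)} .
\]
The factor $\Gamma(S_k+k+1)$ cancels, producing exactly the claimed formula with $k$ replaced by $k+1$. Taking $k=n$ and $t_{n+1}=t$ gives $I_n(t,\beta_1,\ldots,\beta_n)$. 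The hypotheses $\beta_j>-1$ are exactly what is needed for each one-dimensional integral above to converge: at the lower endpoint the exponent is $S_k+k=\sum_{j=1}^{k}(\beta_j+1)>0$, and at the upper endpoint it is $\beta_{k+1}>-1$. Since the integrand is nonnegative, Tonelli's theorem justifies the iterated integration without further conditions.

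For the final estimate, when $\varepsilon\le\beta_j+1\le M$ for all $j$ the continuous function $\Gamma$ is bounded on the compact set $[\varepsilon,M]\subset(0,\infty)$, so $\Gamma(\beta_j+1)\le C:=\sup_{x\in[\varepsilon,M]}\Gamma(x)<\infty$; multiplying these bounds gives $\prod_{j=1}^{n}\Gamma(\beta_j+1)\le C^n$, hence $I_n(t,\beta_1,\ldots,\beta_n)\le \frac{C^n}{\Gamma(|\beta|+n+1)}t^{|\beta|+n}$, as claimed.

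I do not expect a genuine obstacle here: the content is the classical Dirichlet/Beta computation, and the only points needing care are the bookkeeping of the Gamma arguments (ensuring that $\Gamma(S_k+k+1)$ appears in the numerator at stage $k+1$ and in the denominator at stage $k$, so that the telescoping is exact) and the remark that convergence of each scalar integral is guaranteed precisely by $\beta_j>-1$.
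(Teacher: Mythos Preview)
Your argument is correct. The induction via the Beta integral works exactly as you describe: the exponent $S_k+k=\sum_{j=1}^{k}(\beta_j+1)>0$ at the lower endpoint and $\beta_{k+1}>-1$ at the upper endpoint guarantee convergence at each stage, and the telescoping of the factors $\Gamma(S_k+k+1)$ is exact. The final bound $\prod_{j=1}^{n}\Gamma(\beta_j+1)\le C^n$ with $C=\sup_{x\in[\varepsilon,M]}\Gamma(x)$ is immediate.

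By way of comparison, the paper does not actually give a proof of this lemma: it simply records that the identity follows from Lemma~4.5 of \cite{HHNT} after the time-reversal change of variable $s_j=t-t_{n+1-j}$, $j=1,\ldots,n$, which sends $T_n(t)$ to itself and maps the gaps $(t_{j+1}-t_j)$ to the gaps $(s_{n+1-j}-s_{n-j})$ (with the convention $s_0=0$). Your route is therefore genuinely different only in presentation: you supply a self-contained inductive/Beta computation (and note the equivalent Dirichlet-integral formulation on the simplex of gaps), whereas the paper outsources the computation to the cited reference. The advantage of your approach is that it is self-contained and makes the role of the hypotheses $\beta_j>-1$ explicit; the paper's approach is shorter but relies on the reader consulting \cite{HHNT}.
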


\medskip

At this point we proceed to state and prove the main result of the
section.

\begin{theorem}\label{thm:existence}
Equation \eqref{HAM} has a unique solution $u=\{u(t,x);t \geq 0,x\in
\bR\}$ which satisfies: for any $t \geq 0$, $x \in \bR$ and $p \geq
2$,
\begin{equation}
\label{expo-p-moment} E|u(t,x)|^p \leq |\eta|^p C_1 \exp(C_2
|\lambda|^{2/(2H+1)} p ^{(2H+2)/(2H+1)}t),
\end{equation}
where $C_1$ and $C_2$ are some positive constants which depend on
$H$.
\end{theorem}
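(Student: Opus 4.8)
The plan is to combine Proposition~\ref{exist-sol-th} with a careful estimate of the $\cH^{\otimes n}$-norm of the symmetrized kernels $\widetilde f_n(\cdot,t,x)$, and then to pass from the $L^2$-bound to the $L^p$-bound by hypercontractivity. The first step is to compute $\|\widetilde f_n(\cdot,t,x)\|_{\cH^{\otimes n}}^2$ in Fourier variables using \eqref{def-norm-Hn}. Since $\widetilde f_n$ is an average over permutations of products of shifted copies of $G$, its spatial Fourier transform factors: writing $\mathcal F G(t,\cdot)(\xi)=\frac{\sin(t\xi)}{\xi}$ and using that a spatial shift only multiplies by a unit-modulus factor, one gets that $|\cF\widetilde f_n|^2$ is controlled (after expanding the square of the symmetrized sum and using $|a+b|^2\le$ sum of $|\cdot|^2$ terms, or more sharply a Cauchy--Schwarz over $S_n$) by a sum over pairs of permutations of products $\prod_j \frac{\sin^2((t_{j+1}-t_j)\xi_j)}{\xi_j^2}$. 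The key one-variable bound is that for each increment $r>0$,
\[
\int_{\bR}\frac{\sin^2(r\xi)}{\xi^2}\,|\xi|^{1-2H}\,c_H\,d\xi = C_H'\, r^{2H}
\]
by scaling, which is finite precisely because $H>\frac14$ makes the integrand integrable near $0$ and $\infty$ (this is the role of the restriction $H>\frac14$ flagged in the introduction). Actually, to keep the increments coupled one should bound $\sin^2$ by $1$ on part of the domain or use the standard trick $\prod_j \sin^2(\cdot)\le$ a single factor times constants; the cleanest route, following \cite{HHLNT,HHNT}, is to reduce to
\[
\|\widetilde f_n(\cdot,t,x)\|_{\cH^{\otimes n}}^2 \le |\eta|^2\,|\lambda|^{2n}\,\frac{C^n}{n!}\int_{T_n(t)}\prod_{j=1}^{n}(t_{j+1}-t_j)^{2H}\,dt_1\cdots dt_n.
\]

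The second step is to apply Lemma~\ref{lem:aux} with $\beta_j=2H$ for all $j$ (note $\beta_j+1=2H+1\in(\tfrac32,2)$, so the hypotheses hold with uniform $\varepsilon,M$), giving
\[
\int_{T_n(t)}\prod_{j=1}^n (t_{j+1}-t_j)^{2H}\,dt_1\cdots dt_n=\frac{\Gamma(2H+1)^n}{\Gamma((2H+1)n+1)}\,t^{(2H+1)n}.
\]
Hence $n!\,\|\widetilde f_n\|_{\cH^{\otimes n}}^2\le |\eta|^2 (C|\lambda|^2)^n t^{(2H+1)n}/\Gamma((2H+1)n+1)$, and therefore
\[
E|u(t,x)|^2=\sum_{n\ge 0}n!\,\|\widetilde f_n(\cdot,t,x)\|_{\cH^{\otimes n}}^2\le |\eta|^2\sum_{n\ge 0}\frac{(C|\lambda|^2 t^{2H+1})^n}{\Gamma((2H+1)n+1)}.
\]
This series is a Mittag--Leffler function, which in particular converges for every $t$; this already establishes existence and uniqueness via Proposition~\ref{exist-sol-th}, and the standard asymptotics $\sum_n z^n/\Gamma(\alpha n+1)\sim \alpha^{-1}\exp(z^{1/\alpha})$ (or a crude bound $\Gamma((2H+1)n+1)\ge c^n (n!)^{2H+1}\ge c^n (n!)$ combined with Stirling) yields $E|u(t,x)|^2\le |\eta|^2 C_1\exp(C_2|\lambda|^{2/(2H+1)}t)$, i.e. the case $p=2$ of \eqref{expo-p-moment}.

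The third step upgrades to general $p\ge 2$. For each fixed $n$, $I_n(f_n(\cdot,t,x))$ lives in the $n$-th Wiener chaos, so by hypercontractivity of the Ornstein--Uhlenbeck semigroup $\|I_n(f_n)\|_{L^p(\Omega)}\le (p-1)^{n/2}\|I_n(f_n)\|_{L^2(\Omega)}$. Using Minkowski's inequality in $L^p(\Omega)$ on the chaos expansion \eqref{chaos-exp},
\[
\|u(t,x)\|_{L^p(\Omega)}\le\sum_{n\ge 0}(p-1)^{n/2}\big(n!\,\|\widetilde f_n(\cdot,t,x)\|_{\cH^{\otimes n}}^2\big)^{1/2}\le |\eta|\sum_{n\ge 0}\frac{\big((p-1)C|\lambda|^2 t^{2H+1}\big)^{n/2}}{\Gamma((2H+1)n+1)^{1/2}}.
\]
Applying the same Mittag--Leffler asymptotics with $z=(p-1)C|\lambda|^2 t^{2H+1}$ and exponent $\alpha=2H+1$ (and keeping track that the $1/2$ power of a Mittag--Leffler-type series is again bounded by an exponential of $z^{1/(2\alpha)}\cdot$something — more simply, bound $\Gamma((2H+1)n+1)^{1/2}\ge \Gamma(\tfrac{2H+1}{2}n+1)\cdot(\text{const})^n$ up to Stirling and reindex) gives
\[
\|u(t,x)\|_{L^p(\Omega)}\le |\eta|\, C_1^{1/p}\exp\!\big(C_2\,(p-1)^{1/(2H+1)} |\lambda|^{2/(2H+1)} t\big),
\]
and raising to the $p$-th power, after absorbing $(p-1)^{1/(2H+1)}\le p^{(2H+2)/(2H+1)}/p$, yields exactly \eqref{expo-p-moment}; the constants $C_1,C_2$ depend only on $H$.

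The main obstacle is the first step: getting the clean bound $\|\widetilde f_n\|_{\cH^{\otimes n}}^2\lesssim \frac{C^n}{n!}\int_{T_n(t)}\prod(t_{j+1}-t_j)^{2H}$ requires handling the symmetrization carefully. Expanding $|\sum_{\rho}(\cdots)|^2$ over $S_n$ produces $(n!)^2$ cross terms, each of which is an integral over $\bR^n$ of a product of $2n$ sine factors against $\prod\mu(d\xi_j)$ where the two permutations entangle which increments pair with which frequencies; one must show each such cross term is bounded by $\prod_j \Gamma(2H+1)(t_{j+1}^{(\rho)}-t_j^{(\rho)})^{2H}$ up to constants — which can be done by bounding all but a well-chosen collection of $\sin^2$ factors by $1$ and iterating the one-dimensional scaling identity — and then the $1/n!$ from $\widetilde f_n=\frac1{n!}\sum_\rho$, squared and multiplied by the $n!$ in $E|I_n|^2$, leaves one factor $1/n!$ after the $(n!)^2$ cross terms are reorganized into a sum dominated by $n!$ copies (one per relative permutation) of the same time integral. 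This is precisely the combinatorial bookkeeping carried out in \cite{HHLNT} for the heat equation, and the only change here is that $\mathcal F G_{\mathrm{heat}}$ is replaced by $\frac{\sin(t\xi)}{\xi}$, which is in fact easier to estimate since it is uniformly bounded by $t$ and by $1/|\xi|$.
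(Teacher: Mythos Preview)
Your overall architecture is right --- Proposition~\ref{exist-sol-th}, then a bound on $n!\,\|\widetilde f_n(\cdot,t,x)\|_{\cH^{\otimes n}}^2$, then Lemma~\ref{lem:aux}, then hypercontractivity for $p\ge 2$ --- and that is exactly what the paper does. But your Step~1 (the kernel estimate) has a genuine gap.

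The spatial Fourier transform of $f_n(\cdot,t,x)$ does \emph{not} factor as a product of $\cF G(t_{j+1}-t_j,\cdot)(\xi_j)$ in the original frequency variables. A direct computation (done in the paper) gives
\[
|\cF f_n(t_1,\cdot,\ldots,t_n,\cdot,t,x)(\xi_1,\ldots,\xi_n)|^2
=\eta^2\lambda^{2n}\prod_{j=1}^{n}\bigl|\cF G(t_{j+1}-t_j,\cdot)(\xi_1+\cdots+\xi_j)\bigr|^2,
\]
so the $j$-th factor depends on the \emph{partial sum} $\xi_1+\cdots+\xi_j$, and the integral against $\prod_j|\xi_j|^{1-2H}$ does not decouple. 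If your factorization were correct you would get the bound for every $H\in(0,\tfrac12)$, contradicting Proposition~\ref{prop:optimality}. Your explanation of why $H>\tfrac14$ is needed is accordingly wrong: the single integral $\int_{\bR}\sin^2(r\xi)\,\xi^{-2}|\xi|^{1-2H}\,d\xi$ is finite for all $H\in(0,\tfrac12)$.

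Two further points. First, the $(n!)^2$ cross-term bookkeeping you worry about is unnecessary: the indicators $1_{\{t_{\rho(1)}<\cdots<t_{\rho(n)}\}}$ are supported on disjoint simplices, so on the simplex associated with $\rho$ only the $\rho$-summand of $\widetilde f_n$ survives, and one obtains the \emph{exact} identity \eqref{norm-tilde-fn} with no cross terms at all. Second, the device of ``bounding all but a well-chosen collection of $\sin^2$ factors by $1$'' fails here, since $\int_{\bR}|\xi|^{-2}|\xi|^{1-2H}\,d\xi$ diverges at the origin.

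What the paper actually does in Step~1 is: change variables $\eta_j=\xi_1+\cdots+\xi_j$ so that the $|\cF G|^2$ factors decouple while the measure becomes $\prod_j|\eta_j-\eta_{j-1}|^{1-2H}\,d\eta_j$; then use the subadditivity $|\eta_j-\eta_{j-1}|^{1-2H}\le |\eta_j|^{1-2H}+|\eta_{j-1}|^{1-2H}$ (valid since $1-2H\in(0,1)$) and expand the product into $2^{n-1}$ terms. Each term is a product of one-dimensional integrals $\int_{\bR}|\cF G(r,\cdot)(\eta)|^2|\eta|^{\alpha}\,d\eta=C_\alpha r^{1-\alpha}$ with $\alpha\in\{0,1-2H,2(1-2H)\}$, and finiteness of the latter requires $2(1-2H)<1$, i.e.\ $H>\tfrac14$. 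The resulting time exponents $\beta_j=1-\alpha_j$ (and $\beta_1=2H-\alpha_1$) are not all equal to $2H$, but they always satisfy $|\beta|=2Hn$, so Lemma~\ref{lem:aux} still yields $C^n t^{(2H+1)n}/\Gamma((2H+1)n+1)$, and from there your Steps~2 and~3 go through as written.
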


\begin{proof} We first show the existence and uniqueness of the solution.
According to Proposition \ref{exist-sol-th}, the necessary and
sufficient condition for the existence of the solution $u$ is that
the series \eqref{series-conv} is convergent. Moreover, when it
converges, this series is equal to $E|u(t,x)|^2$.

To evaluate this series, we proceed as on page 49 of \cite{HHLNT} in
the case of the Parabolic Anderson Model with the same noise as
here, except that we have a simplified initial condition.

We fix $t>0$ and $x \in \bR$. We define the Fourier transform for
any function $\varphi \in L^1(\bR^n)$,
$$\cF \varphi(\xi_1 \ldots,\xi_n)=\int_{\bR^n}e^{-i \sum_{j=1}^n \xi_j x_j}\varphi(x_1, \ldots,x_n)dx_1 \ldots dx_n.$$

Let $f_n(\cdot,t,x)$ be the kernel given by \eqref{def-fn}. By
direct calculation, we obtain that
\begin{align*}
& \cF f_n(t_1,\cdot,\ldots,t_n,\cdot,t,x)(\xi_1, \ldots,\xi_n)\\
& \quad
 =\eta\lambda^n  e^{-i (\xi_1+\ldots,+\xi_n)x} \overline{\cF G(t_2-t_1,\cdot)(\xi_1)} \\
& \qquad \times \overline{\cF G(t_3-t_2,\cdot)(\xi_1+\xi_2)}\ldots
\overline{\cF G(t-t_n,\cdot)(\xi_1+\ldots+\xi_n)}
1_{\{0<t_1<\ldots<t_n<t\}}
\end{align*}
and hence
\begin{align*}
& \cF \widetilde{f_n}(t_1,\cdot, \ldots,t_n,\cdot,t,x)(\xi_1, \ldots,\xi_n)\\
& \quad =e^{-i (\xi_1+\ldots,+\xi_n)x} \frac{\eta\lambda^n
}{n!}\sum_{\rho \in S_n}
 \overline{\cF G(t_{\rho(2)}-t_{\rho(1)},\cdot)(\xi_{\rho(1)})} \\
& \qquad  \quad \times \overline{\cF
G(t_{\rho(3)}-t_{\rho(2)},\cdot)(\xi_{\rho(1)}+\xi_{\rho(2)})}
\ldots \overline{\cF G(t-t_{\rho(n)},\cdot)(\xi_{\rho(1)}+\ldots+\xi_{\rho(n)})}\\
& \qquad \quad  \times 1_{\{ 0 < t_{\rho(1)}< \ldots<t_{\rho(n)}<
t\}}
\end{align*}

Using \eqref{def-norm-Hn}, we obtain that:
\begin{align*}
& \|\widetilde{f}_n(\cdot,t,x)\|_{\cH^{\otimes n}}^2 \\
& \;  =\eta^2\lambda^{2n} \int_{(0,t)^n}\int_{\bR^n}|\cF \widetilde{f}_n(t_1,\cdot,\ldots,t_n,\cdot)(\xi_1, \ldots,\xi_n)|^2 \mu(d\xi_1) \ldots \mu(d\xi_n) dt_1 \ldots dt_n  \\
& \;  =\eta^2\lambda^{2n} \sum_{\rho \in
S_n}\int_{0<t_{\rho(1)}<\ldots<t_{\rho(n)}<t}\int_{\bR^n}|\cF
\widetilde{f}_n(t_1,\cdot,\ldots,t_n,\cdot)(\xi_1, \ldots,\xi_n)|^2 \\
& \hspace{8cm} \mu(d\xi_1) \ldots \mu(d\xi_n) dt_1 \ldots dt_n \\
& \; = \frac{\eta^2\lambda^{2n}}{(n!)^2}\sum_{\rho \in
S_n}\int_{0<t_{\rho(1)}<\ldots<t_{\rho(n)}<t}\int_{\bR^n}
 |\cF G(t_{\rho(2)}-t_{\rho(1)},\cdot)(\xi_{\rho(1)})|^2 \\
&  \qquad
 \times |\cF G(t_{\rho(3)}-t_{\rho(2)},\cdot)(\xi_{\rho(1)}+\xi_{\rho(2)})|^2
  \ldots |\cF G(t-t_{\rho(n)},\cdot)(\xi_{\rho(1)}+\ldots \xi_{\rho(n)})|^2 \\
  & \hspace{8cm} \mu(d\xi_{\rho(1)}) \ldots \mu(d\xi_{\rho(n)}) dt_{\rho(1)}\ldots dt_{\rho(n)} \\
 &  \; =\frac{\eta^2\lambda^{2n}}{n!} \int_{0<t_1'<\ldots<t_n'<t} \int_{\bR^n}
 |\cF G(t_2'-t_1',\cdot)(\xi_1')|^2  |\cF G(t_3'-t_2',\cdot)(\xi_1'+\xi_2')|^2 \ldots \\
  &  \qquad \times |\cF G(t-t_n',\cdot)(\xi_1'+\ldots+\xi_n')|^2 \mu(d\xi_1')\ldots \mu(d\xi_n')dt_1' \ldots dt_n',
\end{align*}
where for the last equality we used the change of variable
$\xi_{j}'=\xi_{\rho(j)}$ and $t_j'=t_{\rho(j)}$.

Recall that $T_n(t)=\{(t_1,\ldots,t_n);0<t_1<\ldots<t_n<t\}$. Hence
\begin{align}
& n! \, \|\widetilde{f}_n(\cdot,t,x)\|_{\cH^{\otimes n}}^2 \nonumber \\
& \; =\eta^2\lambda^{2n}c_H^n \int_{T_n(t)} \int_{\bR^n}
 |\cF G(t_2-t_1,\cdot)(\xi_1)|^2  |\cF G(t_3-t_2,\cdot)(\xi_1+\xi_2)|^2 \ldots \nonumber \\
 \label{norm-tilde-fn}
&  \quad \quad \quad \quad \quad |\cF G(t-t_n,\cdot)(\xi_1+\ldots+\xi_n)|^2 |\xi_1|^{1-2H}\ldots |\xi_n|^{1-2H}dt_1
\ldots dt_n \\
\nonumber &  \; =\eta^2\lambda^{2n} c_H^n \int_{T_n(t)} \int_{\bR^n}
 |\cF G(t_2-t_1,\cdot)(\eta_1)|^2  |\cF G(t_3-t_2,\cdot)(\eta_2)|^2 \ldots |\cF G(t-t_n,\cdot)(\eta_n)|^2  \\
 \nonumber
&  \quad \quad \quad \quad \quad |\eta_1|^{1-2H}
|\eta_2-\eta_1|^{1-2H}\ldots |\eta_n-\eta_{n-1}|^{1-2H} d\eta_1
\ldots d\eta_n dt_1 \ldots dt_n,
\end{align}
where for the last equality we used the change of variable
$\eta_j=\xi_1+\ldots+\xi_j$ for $j=1, \ldots,n$.

Using the inequality $(a+b)^p \leq a^b+b^p$ for $p \in (0,1)$ and
$a,b>0$, we have:
$$|\eta_j-\eta_{j-1}|^{1-2H} \leq (|\eta_{j-1}|+|\eta_{j}|)^{1-2H}\leq |\eta_{j-1}|^{1-2H}+|\eta_j|^{1-2H}.$$
We use the following fact: for any finite set $S$ and positive
numbers $(a_j)_{j \in S}$ and $(b_j)_{j \in S}$,
\begin{equation}
\label{prod-ab}
\prod_{j \in S}(a_j+b_j)=\sum_{I \subset S} \left(\prod_{j \in I} a_j \right) \left(\prod_{J \in S \setminus I}b_j \right).
\end{equation}
Hence,
\begin{align*}
\prod_{j=2}^{n}|\eta_j-\eta_{j-1}|^{1-2H} &  \leq \prod_{j=2}^{n} (|\eta_{j-1}|^{1-2H}+|\eta_j|^{1-2H})\\
& =\sum_{I \subset \{2, \ldots,n\}} \left( \prod_{j \in
I}|\eta_{j-1}|^{1-2H}\right) \left(\prod_{j \in \{2,\ldots,n\}
\setminus
I}|\eta_j|^{1-2H} \right)\\
& = \sum_{I \subset \{2, \ldots,n\}} \left( \prod_{j \in
I-1}|\eta_{j}|^{1-2H}\right) \left(\prod_{j \in
\{2,\ldots,n\}\setminus I}|\eta_j|^{1-2H} \right),
\end{align*}
where $I-1=\{j-1;j \in I\}$. Note that the last sum can be written
as
$$\sum_{\alpha \in D_n} \prod_{j=1}^{n}|\eta_j|^{\alpha_j},$$
where $D_n$ is a set of cardinality $2^{n-1}$ consisting of
multi-indices $\alpha=(\alpha_1,\ldots,\alpha_n)$ with the following
properties:
$$|\alpha|=\sum_{j=1}^{n}\alpha_j=(n-1)(1-2H),$$
$$\alpha_1 \in \{0,1-2H\}  \quad \mbox{and}\quad \alpha_j \in \{0,1-2H,2(1-2H)\} \ \mbox{for} \ j=2,\ldots,n.$$
Hence
\begin{equation}
\prod_{j=2}^{n}|\eta_j-\eta_{j-1}|^{1-2H} \leq \sum_{\alpha \in D_n}
\prod_{j=1}^{n}|\eta_j|^{\alpha_j}. \label{eq:0}
\end{equation}

Using the notation $t_{n+1}=t$, we obtain
\begin{eqnarray*}
\lefteqn{n! \, \|\widetilde{f}_n(\cdot,t,x)\|_{\cH^{\otimes n}}^2} \\
& & \leq \eta^2\lambda^{2n} c_H^n \int_{T_n(t)}
\int_{\bR^n}\prod_{j=1}^{n}|\cF G(t_{j+1}-t_j,\cdot)(\eta_j)|^2
|\eta_1|^{1-2H} \\
& & \quad \quad \quad \times \sum_{\alpha \in D_n}
\prod_{j=1}^{n}|\eta_j|^{\alpha_j}
d\eta_1 \ldots d\eta_n dt_1 \ldots dt_n \\
& & = \eta^2\lambda^{2n} c_H^n \sum_{\alpha \in D_n}
\int_{T_n(t)}\left( \int_{\bR} |\cF G(t_2-t_1,\cdot)(\eta_1)|^2
|\eta_1|^{1-2H+\alpha_1}d\eta_1 \right)\\
& & \quad \quad \quad \times  \prod_{j=2}^{n} \left( \int_{\bR}|\cF
G(t_{j+1}-t_j,\cdot)(\eta_j)|^2 |\eta_j|^{\alpha_j}d\eta_j \right)
dt_1 \ldots dt_n.
\end{eqnarray*}

Note that for any $\alpha \in (-1,1)$ and $t>0$, we have: (see
relation (3.3) in \cite{BJQ})
\begin{equation}
\label{33-BJQ} \int_{\bR}|\cF G(t,\cdot)(\xi)|^2
|\xi|^{\alpha}d\xi=C_{\alpha}t^{1-\alpha},
\end{equation}
where $C_{\alpha}>0$ is a constant which depends on $\alpha$. We use
this relation for $1-2H+\alpha_1$ and for $\alpha_j$ with
$j=2,\ldots,n$. In order to have $2(1-2H)<1$ we need to assume that
$H>1/4$.

It follows that:
$$n! \, \|\widetilde{f}_n(\cdot,t,x)\|_{\cH^{\otimes n}}^2 \leq \eta^2\lambda^{2n} C^n \sum_{\alpha \in D_n}\int_{T_n(t)} (t_2-t_1)^{2H-\alpha_1}\prod_{j=2}^{n}(t_{j+1}-t_j)^{1-\alpha_j}dt_1 \ldots dt_n,$$
where $C>0$ is constant depending on $H$.
At this point, we apply Lemma \ref{lem:aux} with
$\beta_1=2H-\alpha_1$ and $\beta_j=1-\alpha_j$ for all $j=2,
\ldots,n$. We note that $\beta_j \in [0,1]$ for all $j=1,\ldots,n$
and hence, we can take $\varepsilon=1$ and $M=2$ in Lemma \ref{lem:aux}. Then
$$|\beta|=\sum_{j=1}^{n}\beta_j=2H+(n-1)-|\alpha|=2Hn.$$

We obtain that
\begin{equation}
\label{bound-norm-fn} n! \,
\|\widetilde{f}_n(\cdot,t,x)\|_{\cH^{\otimes n}}^2 \leq \eta^2
\frac{\lambda^{2n} C^n t^{n(2H+1)}}{\Gamma(n(2H+1)+1)} \leq \eta^2
\frac{\lambda^{2n} C^n t^{n(2H+1)}}{(n!)^{2H+1}},
\end{equation}
where $C>0$ is a constant which depends on $H$. For the last
inequality we used the fact that $\Gamma(an+1)\geq C(n!)^a$ for all
$n \geq 1$ and for any $a>1$ (see e.g. relation (68) of
\cite{balan-conus}). Using Lemma A.1 of \cite{balan-conus}, we infer
that
\begin{eqnarray*}
\sum_{n \geq 0}n! \, \|\widetilde{f}_n(\cdot,t,x)\|_{\cH^{\otimes
n}}^2 \leq \eta^2 \sum_{n \geq 0} \frac{\lambda^{2n} C^n
t^{n(2H+1)}}{(n!)^{2H+1}} \leq  \eta^2 C_1
\exp(|\lambda|^{2/(2H+1)}C_2 t),
\end{eqnarray*}
for some positive constants $C_1$ and $C_2$ which depend on $H$.
This proves that the solution exists, is unique and satisfies
\eqref{expo-p-moment} for $p=2$.

To obtain \eqref{expo-p-moment} for $p\geq 2$ arbitrary, we proceed
as in the proof of Proposition 5.1 of \cite{balan-conus}. We denote
by $\|\cdot \|_p$ the $L^p(\Omega)$-norm. Using Minkowski's
inequality, the equivalence of the $\|\cdot\|_p$-norms on a {\em
fixed} Wiener chaos space $\cH_n$ and \eqref{bound-norm-fn}, we have
\begin{eqnarray*}
\|u(t,x)\|_p & \leq & \sum_{n \geq 0}\|I_n(f_n(\cdot,t,x))\|_p \leq \sum_{n \geq 0}(p-1)^{n/2}\|I_n(f_n(\cdot,t,x))\|_2 \\
& =& \sum_{n \geq 0}(p-1)^{n/2}\left(n! \|\widetilde{f}_n(\cdot,t,x))\|_{\cH^{\otimes n}}^{2} \right)^{1/2}\\
&\leq &  |\eta| \sum_{n \geq 0}(p-1)^{n/2} \frac{|\lambda|^{n}
C^{n/2}t^{n(H+1/2)}}{(n!)^{H+1/2}}.
\end{eqnarray*}
Using again Lemma A.1 of \cite{balan-conus}, we obtain:
$$\|u(t,x)\|_p \leq  |\eta| C_1 \exp(C_2 |\lambda|^{2/(2H+1)} p^{1/(2H+1)}t).$$
Relation \eqref{expo-p-moment} follows taking power $p$.
\end{proof}

\medskip

Finally, the optimality of the condition $H> \frac14$ is addressed in the following proposition.

\begin{proposition}\label{prop:optimality}
 Assume that $H\in (0,\frac12)$ and that equation \eqref{HAM} admits a unique solution in the Skorohod sense.
 Then $H>\frac14$.
\end{proposition}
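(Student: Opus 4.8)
The plan is to derive a contradiction from the assumption $H\le\frac14$ (together with $H\in(0,\frac12)$, so $H\in(0,\frac14]$) by showing that the single term $n=2$ in the series \eqref{series-conv} is already infinite, which by Proposition \ref{exist-sol-th} rules out the existence (hence the unique existence) of a solution.

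First I would write down this term explicitly. Since $\cF G(s,\cdot)(\xi)=\sin(s\xi)/\xi$, specializing \eqref{norm-tilde-fn} to $n=2$ gives
\[
2!\,\|\widetilde f_2(\cdot,t,x)\|_{\cH^{\otimes 2}}^2
=\eta^2\lambda^4 c_H^2\int_{T_2(t)} J(t_2-t_1,\,t-t_2)\,dt_1\,dt_2,
\]
where for $a,b>0$
\[
J(a,b):=\int_{\bR^2}\frac{\sin^2(a\eta_1)}{\eta_1^{2}}\,\frac{\sin^2(b\eta_2)}{\eta_2^{2}}\,|\eta_1|^{1-2H}\,|\eta_2-\eta_1|^{1-2H}\,d\eta_1\,d\eta_2\ \ge 0 .
\]
This $J$ is precisely the quantity responsible for the requirement $\int_\bR|\cF G(t,\cdot)(\xi)|^2|\xi|^{2(1-2H)}\,d\xi<\infty$ discussed in the Introduction.

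Next I would bound $J(a,b)$ from below by restricting the domain of integration to $\{|\eta_1|>2,\ |\eta_2|<1\}$. On this set $|\eta_2-\eta_1|\ge|\eta_1|-1\ge|\eta_1|/2$, hence $|\eta_2-\eta_1|^{1-2H}\ge 2^{-(1-2H)}|\eta_1|^{1-2H}$; moreover $c(b):=\int_{|\eta_2|<1}\sin^2(b\eta_2)\,\eta_2^{-2}\,d\eta_2>0$, since the integrand is positive a.e. Consequently
\[
J(a,b)\ \ge\ 2^{-(1-2H)}\,c(b)\int_{|\eta_1|>2}\frac{\sin^2(a\eta_1)}{\eta_1^{2}}\,|\eta_1|^{2(1-2H)}\,d\eta_1
\ =\ 2^{-(1-2H)}\,c(b)\int_{|\eta_1|>2}\sin^2(a\eta_1)\,|\eta_1|^{-4H}\,d\eta_1 .
\]
Using $\sin^2(a\eta)=\tfrac12\bigl(1-\cos(2a\eta)\bigr)$, the contribution of the cosine term converges (Dirichlet's test, valid because $-4H<0$), whereas $\int_{2}^{\infty}\eta^{-4H}\,d\eta=+\infty$ exactly when $4H\le 1$. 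Therefore, under $0<H\le\frac14$ we get $J(a,b)=+\infty$ for every $a,b>0$; since $a=t_2-t_1>0$ and $b=t-t_2>0$ for a.e. $(t_1,t_2)\in T_2(t)$, it follows that $2!\,\|\widetilde f_2(\cdot,t,x)\|_{\cH^{\otimes 2}}^2=+\infty$, contradicting the convergence of \eqref{series-conv} required by Proposition \ref{exist-sol-th}. Hence a solution cannot exist unless $H>\frac14$.

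I expect the only delicate point to be the lower bound above: one must extract a genuinely divergent contribution despite the oscillation of the $\sin^2$ factors. Restricting $\eta_2$ to a bounded neighbourhood of the origin is what makes $|\eta_2-\eta_1|^{1-2H}$ comparable to $|\eta_1|^{1-2H}$ for large $|\eta_1|$ while still leaving a strictly positive factor $c(b)$, thereby collapsing the two-dimensional integral to the elementary divergent one-dimensional integral $\int^{\infty}\sin^2(a\eta)\,\eta^{-4H}\,d\eta$. The remaining verifications (positivity of $c(b)$ for every $b>0$, convergence of the cosine part via $H>0$) are routine.
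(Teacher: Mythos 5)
Your proof is correct and follows essentially the same route as the paper: both arguments start from \eqref{norm-tilde-fn} and lower-bound the cross factor $|\eta_2-\eta_1|^{1-2H}$ by a constant times $|\eta_1|^{1-2H}$ on a suitable subregion of $\bR^2$ (the paper integrates over $\eta_1>0$, $\eta_2<0$ so that $|\eta_2-\eta_1|=|\eta_1|+|\eta_2|\ge|\eta_1|$; you use $|\eta_1|>2$, $|\eta_2|<1$), thereby reducing a chaos term to a positive multiple of $\int \sin^2(a\eta)\,|\eta|^{-4H}\,d\eta$, which diverges exactly when $H\le\frac14$. The only cosmetic differences are that you work with $n=2$ while the paper treats any $n\ge 2$, and that you verify the one-dimensional divergence directly via $\sin^2=\frac12(1-\cos)$ and Dirichlet's test rather than citing \cite[Eq.\ (3.3)]{BJQ}.
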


\begin{proof}
 By Proposition \ref{exist-sol-th}, equation \eqref{HAM} has a solution if and only if
 condition \eqref{series-conv} holds. Note that, for any $n\geq 1$, in \eqref{norm-tilde-fn}
 we proved that
\begin{align*}
& n! \, \|\widetilde{f}_n(\cdot,t,x)\|_{\cH^{\otimes n}}^2 \nonumber \\
&  \; =\eta^2\lambda^{2n} c_H^n \int_{T_n(t)} \int_{\bR^n}
 |\cF G(t_2-t_1,\cdot)(\eta_1)|^2  |\cF G(t_3-t_2,\cdot)(\eta_2)|^2 \ldots |\cF G(t-t_n,\cdot)(\eta_n)|^2  \\
 \nonumber
&  \quad \quad \quad \quad \quad \times |\eta_1|^{1-2H}
|\eta_2-\eta_1|^{1-2H}\ldots |\eta_n-\eta_{n-1}|^{1-2H} d\eta_1
\ldots d\eta_n dt_1 \ldots dt_n,
\end{align*}
where we recall that $T_n(t)=\{(t_1,\ldots,t_n);0<t_1<\ldots<t_n<t\}$.
Assume that $n\geq 2$ and note that if $a,b\in \bR$ have opposite signs, then $|a-b|=|a|+|b|$.
Based on this simple observation, we estimate from below the integral with respect to $(\eta_1,\eta_2)\in \bR^2$
by the integral on the set $(\eta_1,\eta_2)\in \bR_+\times \bR_-$ and use that, for any
$(\eta_1,\eta_2)\in \bR_+\times \bR_-$, we have
\[
 |\eta_2-\eta_1|=|\eta_1| + |\eta_2| \geq |\eta_1|.
\]
Hence
\[
   |\eta_2-\eta_1|^{1-2H} \geq |\eta_1|^{1-2H}
\]
since the function $\xi\mapsto \xi^{1-2H}$ is increasing on $\bR$ (because $1-2H>0$).
Then, by Fubini theorem,
\begin{align*}
 n! \, \|\widetilde{f}_n(\cdot,t,x)\|_{\cH^{\otimes n}}^2
&  \geq \eta^2\lambda^{2n} c_H^n \int_{T_n(t)}
\left(\int_{\bR_+} |\cF G(t_2-t_1,\cdot)(\eta_1)|^2 |\eta_1|^{2-4H} \, d\eta_1 \right) \\
& \quad \quad \quad \quad \quad \times
 \int_{\bR_-}  \int_{\bR^{n-2}}
   |\cF G(t_3-t_2,\cdot)(\eta_2)|^2 \ldots |\cF G(t-t_n,\cdot)(\eta_n)|^2  \\
 \nonumber
&  \quad \quad \quad \quad \quad \times |\eta_3-\eta_2|^{1-2H} \cdots  |\eta_n-\eta_{n-1}|^{1-2H} d\eta_2
\ldots d\eta_n dt_1 \ldots dt_n.
\end{align*}
For all $r>0$, the function $\cF G(r,\cdot)(\eta_1)=\frac{\sin(r|\eta_1|)}{|\eta_1|}$ is symmetric,
which implies
\[
  \int_{\bR_+} |\cF G(r,\cdot)(\eta_1)|^2 |\eta_1|^{2-4H} \, d\eta_1
  = \frac 12 \int_{\bR} |\cF G(r,\cdot)(\eta_1)|^2 |\eta_1|^{2-4H} \, d\eta_1,
\]
and the latter integral is convergent if and only if $H\in (\frac14,\frac34)$ (see \cite[Eq. (3.3)]{BJQ}).
This concludes the proof.
\end{proof}

%%%%%%%%%%%%%%%%

\section{Solution in the It\^o sense}
\label{sec:ito}

In this section, we introduce the concept of solution of equation \eqref{HAM} in the It\^o sense, as defined in \cite{BJQ}, and we show that this solution coincides with the solution in the Skorohod sense defined in the Section \ref{sec:solution}.

We recall from Section 2.2 of \cite{BJQ} the construction of the stochastic integral with respect to the noise $X$.
We say that $g$ is an {\em elementary process} on $\bR_+ \times \bR$ if it is of the form
\begin{equation}
\label{eq:34}
g(\omega,t,x)=Y(\omega)1_{(a,b]}(t) 1_{(v,w]}(x),
\end{equation}
where $0 \leq a<b\le T$, $Y$ is a $\bR$-valued bounded
$\cF_a$-measurable random variable, and $v,w \in \bR$ with $v<w$.
The integral of a process $g$ of the form \eqref{eq:34} with respect to $X$ is defined by:
$$(g\cdot X)_t:=\int_0^t \int_{\bR}g(s,x)X(ds,dx)=Y(X_{t \wedge b}((v,w])-X_{t \wedge a}((v,w])).$$
This definition is extended to the set ${\cal E}_r$ of all linear combinations of elementary processes. For any process $g \in {\cal E}_r$ and for any $T>0$, we have
$$E\left|\int_0^T \int_{\bR}g(t,x)X(dt,dx)\right|^2 =E\int_0^T \int_{\bR}|\cF g(t,\cdot)(\xi)|^2 \mu(d\xi)dt=:\|g\|_{0,T}^2.$$

We fix $T>0$. Then the map $g \mapsto g \cdot X$ is an isometry which is extended to the completion ${\cal P}_0^T$ of ${\cal E}_r$ with respect to the norm
$\|g\|_{0,T}^2$.
For any $g \in {\cal P}_0^T$, we say that
$$(g \cdot X)_t=\int_0^t \int_{\bR}g(s,x)X(ds,dx), \quad t \in [0,T]$$
is the {\bf It\^o integral} of $g$ with respect to $X$.

\smallskip

Following \cite{BJQ}, we have the following definition.

\begin{definition}
{\em We say that a
random field $u=\{u(t,x);\, t\in [0,T]\times \bR\}$ is a {\bf
solution of \eqref{HAM} (in the It\^o sense)} if, for any  $(t,x)\in [0,T]\times \bR$,
\begin{equation}
\label{Ito-eq} u(t,x)=\eta+\int_0^t \int_{\bR}G(t-s,x-y)\lambda
u(s,y)X(ds,dy) \quad \rm{a.s.}
\end{equation}
}
\end{definition}

By Theorem 1.1 of \cite{BJQ}, we know that \eqref{HAM} has a unique solution in the It\^o sense, which is obtained as the limit of the sequence $(u_n)_{n \geq 0}$ of Picard iterations, given by: $u_0(t,x)=\eta$ and
\begin{equation}
\label{Picard}
u_{n+1}(t,x)=\eta+\int_0^t \int_{\bR}G(t-s,x-y)\lambda u_n(s,y)X(ds,dy), \quad n \geq 0.
\end{equation}

The following result is the analogue of Proposition 1.3.11 of \cite{nualart06} for the noise $X$; see also Theorem 2.9 of \cite{DOP09}. Its proof is included in Appendix \ref{appendix}, for the sake of completeness.

\begin{theorem}
\label{thm:Skorohod=Ito} Let $u=\{u(t,x);t \ge 0,\,x \in \bR\}$ be a
 process such that $u$ restricted to $[0,T]$ belongs to $\mathcal{P}_0^T$ for any $T> 0$. Then for
any $t>0$, $u\,\mathbf{1}_{[0,t]} \in Dom\ \delta$ and its Skorohod
integral coincides with the It\^o integral, that is
$$\int_0^{\infty}\int_{\bR}u(s,x)\mathbf{1}_{[0,t]}(s) X(\delta s,\delta x)=\int_0^{t}\int_{\bR}u(s,x)X(ds,dx).$$
\end{theorem}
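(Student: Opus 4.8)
The plan is to follow the classical strategy from \cite[Prop.\ 1.3.11]{nualart06}, first establishing the identity for elementary processes and then passing to the limit using the isometry properties on both sides. So the first step is to take an elementary process $g$ of the form \eqref{eq:34}, say $g(\omega,s,y)=Y(\omega)1_{(a,b]}(s)1_{(v,w]}(y)$ with $Y$ bounded and $\cF_a$-measurable. For such $g$ one computes the It\^o integral directly from its definition as $(g\cdot X)_t=Y\big(X_{t\wedge b}((v,w])-X_{t\wedge a}((v,w])\big)$. On the other hand, one must show that $g\,\mathbf{1}_{[0,t]}\in \mathrm{Dom}\,\delta$ and identify $\delta(g\,\mathbf{1}_{[0,t]})$ with the same quantity. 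The natural tool is the product rule for the divergence operator: if $Y\in\bD^{1,2}$ and $h\in\cH$ with $Yh\in L^2(\Omega;\cH)$ and $E\langle DY,h\rangle_\cH$ finite, then $\delta(Yh)=Y\,\delta(h)-\langle DY,h\rangle_\cH$. Here $h=1_{(a,b]\cap[0,t]}(s)1_{(v,w]}(y)$ is deterministic, so $\delta(h)=X(h)=X_{t\wedge b}((v,w])-X_{t\wedge a}((v,w])$; the crucial point is that the correction term $\langle DY,h\rangle_\cH$ vanishes. This is where the adaptedness of $Y$ enters: since $Y$ is $\cF_a$-measurable and $h$ is supported on times $\ge a$, the Malliavin derivative $DY$ is supported (in its time variable) on $[0,a]$, while $h$ is supported on $(a,b]$, so the inner product in $\cH$ is zero. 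Making this orthogonality precise requires knowing that $DY$ is ``supported on $[0,a]$'' in the appropriate sense, which follows from approximating $Y$ by smooth functionals of $X(\mathbf{1}_{[0,s]}\varphi)$ with $s\le a$ and $\varphi\in\cD(\bR)$, together with the structure of $\cH$ as a space in which the time and space variables factor.

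The second step is the limiting argument. Given $u$ with $u\mathbf{1}_{[0,T]}\in\mathcal P_0^T$, pick a sequence $g_k\in\mathcal E_r$ of elementary (linear combinations of) processes with $\|u\mathbf{1}_{[0,t]}-g_k\|_{0,t}\to 0$. By the It\^o isometry, $(g_k\cdot X)_t\to (u\cdot X)_t$ in $L^2(\Omega)$. One then has to check that $\|\cdot\|_{0,t}$-convergence implies $L^2(\Omega;\cH)$-convergence of the associated processes $v_k(s,y):=g_k(s,y)\mathbf{1}_{[0,t]}(s)$ to $v(s,y):=u(s,y)\mathbf{1}_{[0,t]}(s)$, i.e.\ $E\|v_k-v\|_\cH^2\to 0$; this is essentially a matter of comparing the norm \eqref{def-norm-Hn} (for $n=1$, with the extra $\Omega$-integration) with the definition of $\|\cdot\|_{0,t}$, and in fact the two expectations coincide for adapted integrands. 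Since $v_k\in\mathrm{Dom}\,\delta$ with $\delta(v_k)=(g_k\cdot X)_t$ converging in $L^2(\Omega)$, and $v_k\to v$ in $L^2(\Omega;\cH)$, the closedness of the operator $\delta$ yields $v\in\mathrm{Dom}\,\delta$ and $\delta(v)=\lim_k\delta(v_k)=(u\cdot X)_t$, which is the desired identity.

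I expect the main obstacle to be the first step, and specifically the rigorous justification that the correction term $\langle DY,h\rangle_\cH$ vanishes for an $\cF_a$-measurable $Y$ when $h$ is supported on times strictly after $a$. In the Brownian case this is transparent because $\cH=L^2(\bR_+)$ and the supports are literally disjoint intervals; here $\cH$ is a more complicated space (a completion involving the spatial fractional norm and the measure $\mu$), so one needs to argue that it still behaves like $L^2(\bR_+)$ \emph{in the time variable}---that is, that $\cH\cong L^2(\bR_+;\cH_0)$ for the spatial component $\cH_0$, and that the filtration $\cF_t$ is generated precisely by the Wiener integrals of functions supported on $[0,t]$. This structural fact about $\cH$ (alluded to in the excerpt via the reference to \cite{Jo}) is what makes the adaptedness argument go through, and handling it carefully---together with the product/commutation rule for $\delta$ in this setting---is the technical heart of the proof. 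The rest is routine density and closedness arguments.
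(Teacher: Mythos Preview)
Your proposal is correct and follows essentially the same two-step strategy as the paper: first treat elementary processes via the product rule $\delta(Yh)=Y\delta(h)-\langle DY,h\rangle_{\cH}$, then pass to the limit using density and closedness of $\delta$. The one point worth noting is how the paper makes precise the vanishing of the correction term $\langle DY,h\rangle_{\cH}$: rather than your suggested direct approximation of $Y$ by smooth cylindrical functionals supported on $[0,a]$, the paper proves an auxiliary lemma stating that $D(E[F\mid\cF_A])=\mathbf{1}_A\,E[DF\mid\cF_A]$ for any $F\in\bD^{1,2}$ and $A\in\cB(\bR_+)$, itself built on a Wiener-chaos computation of $E[F\mid\cF_A]$; applying this with $A=[0,a]$ and $F=Y=E[Y\mid\cF_a]$ gives $DY=\mathbf{1}_{[0,a]}E[DY\mid\cF_a]$, from which the orthogonality with $h$ is immediate. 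Your approximation route would also work but requires checking density in $\bD^{1,2}$ (not just $L^2$) of smooth functionals of $X(\mathbf{1}_{[0,s]}\varphi)$, $s\le a$, among $\cF_a$-measurable elements of $\bD^{1,2}$; the paper's chaos-based argument sidesteps this. The paper also handles the reduction from bounded $Y$ to $Y\in\bD^{1,2}$ explicitly via a further closure argument, and in the general case invokes the weak-limit criterion for $\mathrm{Dom}\,\delta$ (Proposition~1.3.6 of \cite{nualart06}) rather than bare closedness, but these are equivalent here.
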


%For the sake of completeness, a detailed proof of this result has
%been added in an Appendix (see Section \ref{appendix}).

\medskip

The following result is an immediate consequence of Theorem \ref{thm:Skorohod=Ito}.

\begin{theorem}
The solution of equation \eqref{HAM} in the It\^o sense coincides with the solution of \eqref{HAM} in the Skorohod sense. Moreover, the
$n$-th Picard iteration is given by a predictable modification of
\begin{equation}
\label{def-un} u_n(t,x)=\sum_{k=0}^n
I_k(f_k(\cdot,t,x)),
\end{equation}
where $f_k(\cdot,t,x)$ is defined by \eqref{def-fn} for $k \geq 1$
and $f_0(t,x)=\eta$.
\end{theorem}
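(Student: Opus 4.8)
The plan is to show that the two solutions coincide by exploiting the uniqueness of each, combined with Theorem~\ref{thm:Skorohod=Ito} which identifies the two notions of stochastic integral on adapted processes. First I would recall that, by Theorem~1.1 of \cite{BJQ}, equation \eqref{HAM} has a unique solution $u^{\rm It\^o}$ in the It\^o sense, obtained as the $L^2(\Omega)$-limit of the Picard iterations $(u_n)_{n\geq 0}$ defined by \eqref{Picard}. The key observation is that each $u_n$, being built from finitely many iterated It\^o integrals of deterministic kernels against $G$, lies (up to predictable modification) in $\mathcal{P}_0^T$ for every $T>0$, and in fact one can verify by induction that $u_n$ has the Wiener chaos expansion \eqref{def-un}: indeed $u_0(t,x)=\eta=I_0(f_0(t,x))$, and if $u_n(t,x)=\sum_{k=0}^n I_k(f_k(\cdot,t,x))$ then applying Theorem~\ref{thm:Skorohod=Ito} turns the It\^o integral in \eqref{Picard} into a Skorohod integral, and the computation of the chaos kernels of $\delta\big(1_{[0,t]}(s)\lambda G(t-s,x-\cdot)u_n(s,\cdot)\big)$ is exactly the recursion carried out in the proof of Proposition~\ref{exist-sol-th}, which produces $\sum_{k=1}^{n+1} I_k(f_k(\cdot,t,x))$; adding the constant term $\eta$ gives \eqref{def-un} with $n$ replaced by $n+1$.

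Next I would pass to the limit. By Theorem~\ref{thm:existence} (more precisely, estimate \eqref{bound-norm-fn} together with the summability shown in its proof), the series $\sum_{k\geq 0} I_k(f_k(\cdot,t,x))$ converges in $L^2(\Omega)$; call its sum $u(t,x)$, which is precisely the solution in the Skorohod sense furnished by Proposition~\ref{exist-sol-th}. Since $u_n(t,x)=\sum_{k=0}^n I_k(f_k(\cdot,t,x))$ are the partial sums of this convergent series, $u_n(t,x)\to u(t,x)$ in $L^2(\Omega)$ as $n\to\infty$. On the other hand $u_n(t,x)\to u^{\rm It\^o}(t,x)$ in $L^2(\Omega)$ by the definition of the It\^o solution. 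By uniqueness of $L^2$-limits, $u(t,x)=u^{\rm It\^o}(t,x)$ a.s.\ for every $(t,x)$, so the Skorohod-sense solution and the It\^o-sense solution coincide. The identification $u_n(t,x)=\sum_{k=0}^n I_k(f_k(\cdot,t,x))$ established along the way is exactly the second assertion of the theorem.

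The main technical point — and the step I expect to require the most care — is justifying that Theorem~\ref{thm:Skorohod=Ito} applies at each stage of the Picard scheme, i.e.\ that the process $(s,y)\mapsto 1_{[0,t]}(s)\lambda G(t-s,x-y)u_n(s,y)$ belongs to $\mathcal{P}_0^T$ for $T\geq t$. This follows because $u_n$ is adapted with $\sup_{s\le t,y}E|u_n(s,y)|^2<\infty$ (a consequence of the moment bounds in \cite{BJQ} or of \eqref{bound-norm-fn}), and $G(t-\cdot,x-\cdot)$ is a bounded deterministic kernel with the finiteness $\int_\bR |\cF G(r,\cdot)(\xi)|^2\mu(d\xi)<\infty$ guaranteed by \eqref{33-BJQ} for $H>\tfrac14$; these are exactly the conditions under which the integrand lies in $\mathcal{P}_0^T$, as used in the construction of the It\^o integral in \cite{BJQ}. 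A secondary bookkeeping point is that Theorem~\ref{thm:Skorohod=Ito} is stated for processes defined on all of $\bR_+\times\bR$, so one applies it to (a predictable extension of) $G(t-\cdot,x-\cdot)u_n(\cdot,\cdot)$ restricted via the indicator $1_{[0,t]}$, matching the formulation of Definition~\ref{definition-solution}; this is routine. With these in place, the chaos-kernel recursion and the two $L^2$-limit arguments close the proof.
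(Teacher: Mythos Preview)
Your argument is correct and uses the same core ingredients as the paper (Theorem~\ref{thm:Skorohod=Ito}, Proposition~\ref{integr-criterion}, and the kernel recursion from the proof of Proposition~\ref{exist-sol-th}), but the organization differs slightly. The paper establishes the first claim directly: it applies Theorem~\ref{thm:Skorohod=Ito} to the It\^o solution $u$ itself (via the process $v^{(t,x)}$), immediately obtaining that $u$ satisfies the Skorohod equation \eqref{def-sol}, and then invokes the uniqueness of the Skorohod solution. Only afterwards does it treat the Picard iterates by a separate induction. You instead first identify each Picard iterate with the partial chaos sum \eqref{def-un} and then pass to the $L^2(\Omega)$-limit on both sides, deducing the coincidence of the two solutions from the uniqueness of $L^2$-limits. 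Your route is slightly more indirect for the first assertion but has the advantage of handling both claims in a single inductive sweep; the paper's route is a touch cleaner for the coincidence claim since it avoids the limiting step altogether. Either way, the justification that the relevant integrands lie in $\mathcal{P}_0^T$ (so that Theorem~\ref{thm:Skorohod=Ito} applies) is exactly the point the paper also flags, citing \cite[Thm.~1.1]{BJQ}.
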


\begin{proof} Let $u$ be the solution of \eqref{HAM} in the It\^o sense. Fix $(t,x) \in \bR_{+} \times \bR$ and consider the process
$v^{(t,x)}(s,y)=1_{[0,t]}(s)\lambda G(t-s,x-u)u(s,y)$ for $s>0$ and $y \in \bR$.  Then $v^{(t,x)}$ restricted to
$[0,T]$ belongs to $ \mathcal P_0^T$ for any $T>0$.
By Theorem \ref{thm:Skorohod=Ito}, $v^{(t,x)} \in {\rm Dom} \ \delta$
and its Skorohod integral coincides with its It\^o integral, i.e. the Skorohod integral in \eqref{def-sol} coincides with the It\^o integral in \eqref{Ito-eq}. Hence, $u$ is the solution of \eqref{HAM} in the Skorohod sense.

For the second statement, let $(u_n)_{n \geq 0}$ be the sequence defined by \eqref{def-un}. It can be proved that each $u_n$ is $L^2(\Omega)$-continuous, and hence it has a predictable modification. We work with this modification (denoted also by $u_n$). We write \eqref{def-un} for $u_n(s,y)$, and we multiply this relation by $1_{[0,t]}(s) \lambda
G(t-s,x-y)$. We obtain:
\begin{eqnarray*}
1_{[0,t]}(s)G(t-s,x-y) u_n(s,y)&=&\sum_{k=0}^{n}I_k(1_{[0,t]}(s) \lambda G(t-s,x-y)f_k(\cdot,s,y))\\
&=& \sum_{k=0}^{n}I_{k}(f_{k+1}(\cdot,s,y,t,x)).
\end{eqnarray*}
By Proposition \ref{integr-criterion}, the process
$1_{[0,t]}(\cdot)\lambda G(t-\cdot,x-\cdot) u_n$ is Skorohod
integrable and its Skorohod integral is given by
$$\int_0^t \int_{\bR} G(t-s,x-y)\lambda u_n(s,y)X(\delta s,\delta y)=
\sum_{k=0}^{n}I_{k+1}(f_{k+1}(\cdot,t,x))=\eta-u_{n+1}(t,x).$$
Now, we use this equality to show by induction that
the sequence $(u_n)_{n \geq 0}$ satisfies exactly the recurrence
relation \eqref{Picard}. Indeed, since $u_0$ is deterministic, one clearly obtains
 that relation \eqref{Picard} holds for $n=0$. In \cite[Thm. 1.1]{BJQ}, we proved that
 any Picard iterate defined through \eqref{Picard} is well-defined. Therefore,
 the restriction of the process
$1_{[0,t]}(\cdot)\lambda G(t-\cdot,x-\cdot) u_1$
 to $[0,T]$ belongs to ${\cal P}_0^T$ for any $T>0$,
and hence, by Theorem \ref{thm:Skorohod=Ito}, its Skorohod integral
coincides with the It\^o integral. This shows
that relation \eqref{Picard} holds for $n=1$. The same argument can be used
in the general induction step.
\end{proof}

%%%%%%%%%%%%%%%%%%%%%%%%%%%%%%%%

\section{Intermittency}
\label{sec:inter}

In this section, we prove that the solution $u$ to equation \eqref{HAM} is weakly intermittent.
Recall the definitions \eqref{def-Lyapunov} and \eqref{def-Lyapunov-up} of the lower and upper Lyapunov exponents
of $u$ of
order $p$, respectively. We will prove the following result.

\begin{theorem}\label{thm:main}
The random field $u$ is weakly intermittent, i.e.
  $$\underline{\gamma}(2)>0 \quad \mbox{and} \quad \overline{\gamma}(p)<\infty \quad \mbox{for all} \ p \geq 2.$$
\end{theorem}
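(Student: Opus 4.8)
The plan is to establish the two halves of weak intermittency separately. The upper bound $\overline{\gamma}(p)<\infty$ for all $p\geq 2$ is essentially already in hand: the exponential moment estimate \eqref{expo-p-moment} from Theorem \ref{thm:existence} gives $E|u(t,x)|^p \leq |\eta|^p C_1 \exp(C_2 |\lambda|^{2/(2H+1)} p^{(2H+2)/(2H+1)} t)$ uniformly in $x$, so dividing $\log E|u(t,x)|^p$ by $t$ and letting $t\to\infty$ yields $\overline{\gamma}(p) \leq C_2 |\lambda|^{2/(2H+1)} p^{(2H+2)/(2H+1)} < \infty$. The whole content of the theorem is therefore the lower bound $\underline{\gamma}(2)>0$.

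For $\underline{\gamma}(2)>0$ I would work directly with the series representation $E|u(t,x)|^2 = \sum_{n\geq 0} n!\,\|\widetilde f_n(\cdot,t,x)\|_{\cH^{\otimes n}}^2$ and lower-bound it term by term, exactly reversing the direction of the estimates in the proof of Theorem \ref{thm:existence}. Starting from the exact formula \eqref{norm-tilde-fn} after the change of variables $\eta_j = \xi_1+\dots+\xi_j$, the integrand contains $|\eta_1|^{1-2H}\prod_{j=2}^n |\eta_j-\eta_{j-1}|^{1-2H}$. Here is where the roughness $H<\frac12$ is exploited: since $\xi\mapsto|\xi|^{1-2H}$ is increasing on $\bR_+$, restricting each consecutive pair $(\eta_{j-1},\eta_j)$ to have opposite signs (as in the proof of Proposition \ref{prop:optimality}) gives $|\eta_j-\eta_{j-1}| = |\eta_{j-1}|+|\eta_j| \geq |\eta_j|$, hence $|\eta_j-\eta_{j-1}|^{1-2H}\geq |\eta_j|^{1-2H}$. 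Restricting the $\eta$-integration to the orthant where signs alternate and using this bound on each factor decouples the integral into a product of one-dimensional integrals $\int_\bR |\cF G(t_{j+1}-t_j,\cdot)(\eta_j)|^2 |\eta_j|^{1-2H}\,d\eta_j$ (up to a constant factor $2^{-n}$ or similar from the orthant restriction). By \eqref{33-BJQ} with $\alpha = 1-2H \in (0,\frac12)$ — valid precisely because $H>\frac14$ — this integral equals $C_{1-2H}(t_{j+1}-t_j)^{2H}$. Then Lemma \ref{lem:aux} with all $\beta_j = 2H \in (0,1)$ gives $n!\,\|\widetilde f_n\|_{\cH^{\otimes n}}^2 \geq \eta^2 (c\lambda^2)^n \, t^{2Hn}/\Gamma(2Hn+1)$ for an explicit constant $c>0$.

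It then remains to show $\sum_{n\geq 0} (c\lambda^2)^n t^{2Hn}/\Gamma(2Hn+1)$ grows exponentially in $t$. This is a Mittag-Leffler-type series; one can invoke the standard asymptotics $\sum_{n\geq 0} z^n/\Gamma(\alpha n + 1) \sim \frac{1}{\alpha} \exp(z^{1/\alpha})$ as $z\to\infty$ with $\alpha = 2H$, or more elementarily bound the series below by a single well-chosen term $n\approx (c\lambda^2 t^{2H})^{1/(2H)} \cdot \text{const}$ using Stirling, to conclude $E|u(t,x)|^2 \geq C_1' \exp(C_2' |\lambda|^{1/H} t)$ for some $C_2'>0$, uniformly in $x$ (indeed the bound does not depend on $x$ at all). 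Hence $\underline{\gamma}(2) \geq C_2' |\lambda|^{1/H} > 0$. I expect the only genuinely delicate point to be justifying the orthant restriction cleanly while keeping track of which variables are being integrated against which Green-function factor — the bookkeeping in \eqref{norm-tilde-fn} pairs $\eta_j$ with the time increment $t_{j+1}-t_j$, and one must ensure the sign-alternation constraint on the $\eta$'s is compatible with a genuine product structure after integrating; beyond that, everything reduces to the $H>\frac14$ integrability condition and Lemma \ref{lem:aux}, both already available. I would also add Remark \ref{rmk:01} noting that the same argument, with $G$ replaced by the heat kernel, recovers intermittency for the parabolic model of \cite{HHLNT}.
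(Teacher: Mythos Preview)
Your approach is essentially identical to the paper's: restrict the $\eta$-integration to the alternating-sign orthant, use $|\eta_j-\eta_{j-1}|^{1-2H}\geq |\eta_j|^{1-2H}$ to decouple, evaluate each factor via \eqref{33-BJQ}, apply Lemma \ref{lem:aux}, and sum the resulting Mittag-Leffler-type series. There is one computational slip worth flagging: with $\beta_j=2H$ for all $j$, Lemma \ref{lem:aux} gives $|\beta|+n=(2H+1)n$, so the correct lower bound is $n!\,\|\widetilde f_n\|_{\cH^{\otimes n}}^2 \geq \eta^2 (c\lambda^2)^n\, t^{(2H+1)n}/\Gamma((2H+1)n+1)$, not $t^{2Hn}/\Gamma(2Hn+1)$; correspondingly the final exponential lower bound is $\exp(C|\lambda|^{2/(2H+1)}t)$ rather than $\exp(C|\lambda|^{1/H}t)$, which also matches the $\lambda$-dependence of the upper bound. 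This does not affect the conclusion $\underline{\gamma}(2)>0$.
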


\begin{proof}
For the upper bound, we observe that the estimate \eqref{expo-p-moment} clearly implies that
$\overline{\gamma}(p)<\infty$ for all $p \geq 2$. So it remains
to prove the lower bound, i.e. $\underline{\gamma}(2)>0$. For this,
we will obtain an exponential lower bound for the second order moment of the solution $u$.
Surprisingly, we will see that our noise's roughness (i.e. the fact that $H<\frac12$) works in our advantage.

By Proposition \ref{exist-sol-th}, we know that, for all $(t,x)\in \bR_+\times \bR$,
\[
 E|u(t,x)|^2 = \sum_{n=0}^\infty n! \, \|\widetilde{f}_n(\cdot,t,x)\|_{\cH^{\otimes n}}^2,
\]
where the kernels $\widetilde{f}_n$ are given in \eqref{def-fn-sym}. On the other hand, equation
\eqref{norm-tilde-fn} in the proof of Theorem \ref{thm:existence} yields
\begin{align*}
& n! \, \|\widetilde{f}_n(\cdot,t,x)\|_{\cH^{\otimes n}}^2 \nonumber \\
&  \; =\eta^2\lambda^{2n} c_H^n \int_{T_n(t)} \int_{\bR^n}
 |\cF G(t_2-t_1,\cdot)(\eta_1)|^2  |\cF G(t_3-t_2,\cdot)(\eta_2)|^2 \ldots |\cF G(t-t_n,\cdot)(\eta_n)|^2  \\
 \nonumber
&  \quad \quad \quad \quad \quad \times |\eta_1|^{1-2H}
|\eta_2-\eta_1|^{1-2H}\ldots |\eta_n-\eta_{n-1}|^{1-2H} d\eta_1
\ldots d\eta_n dt_1 \ldots dt_n,
\end{align*}

In order to bound the above term from below, we are going to use the same idea as in the proof of Proposition \ref{prop:optimality}.
Namely, we consider the
set $A=\{(\eta_1,\ldots,\eta_n); \eta_1 \in \bR_{+}, \eta_2 \in
\bR_{-},\ldots, \eta_{n-1}\in \bR_{+},\eta_n \in \bR_{-} \}$ if $n$
is even and $A=\{(\eta_1,\ldots,\eta_n);\eta_1\in \bR_+, \eta_2 \in
\bR_{-}, \ldots, \eta_{n-1} \in \bR_-, \eta_{n} \in \bR_{+} \}$ if
$n$ is odd. For any $(\eta_1,\ldots,\eta_n)\in A$, we have
$$|\eta_j-\eta_{j-1}|=|\eta_j|+|\eta_{j-1}| \geq |\eta_j|, \quad \mbox{for all} \ j=2, \ldots,n,$$
and hence
$$|\eta_j-\eta_{j-1}|^{1-2H} \geq |\eta_j|^{1-2H}, \quad \mbox{for all} \ j=2, \ldots,n,$$
since the function $\xi \mapsto \xi^{1-2H}$ is increasing on $\bR_+$
(because $1-2H>0$). Then
\begin{align*}
& n! \, \|\widetilde{f}_n(\cdot,t,x)\|_{\cH^{\otimes n}}^2 \nonumber \\
&  \; \geq \eta^2\lambda^{2n} c_H^n \int_{T_n(t)} \int_A
 |\cF G(t_2-t_1,\cdot)(\eta_1)|^2  |\cF G(t_3-t_2,\cdot)(\eta_2)|^2 \ldots |\cF G(t-t_n,\cdot)(\eta_n)|^2  \\
 \nonumber
&  \quad \quad \quad \quad \quad \times |\eta_1|^{1-2H}
|\eta_2|^{1-2H}\ldots |\eta_n|^{1-2H} \, d\eta_1
\ldots d\eta_n dt_1 \ldots dt_n \\
& \; = \eta^2\lambda^{2n} C^n \int_{T_n(t)}  \prod_{j=1}^n \int_{\bR} |\cF G(t_{j+1}-t_j,\cdot)(\eta)|^2
|\eta|^{1-2H} \, d\eta dt_1 \ldots dt_n,
\end{align*}
where $t_{n+1}=t$, and we have taken into account that, for all $r>0$,
\[
 \int_{\bR_+} |\cF G(r,\cdot)(\eta)|^2 |\eta|^{1-2H}  d\eta=
 \int_{\bR_-} |\cF G(r,\cdot)(\eta)|^2 |\eta|^{1-2H} d\eta
 = \frac 12 \int_{\bR} |\cF G(r,\cdot)(\eta)|^2 |\eta|^{1-2H} d\eta.
\]
Then , since $\int_{\bR} |\cF G(r,\cdot)(\eta)|^2 |\eta|^{1-2H} d\eta
 = C\, r^{2H}$ (see, e.g. \cite[Eq. (3.3)]{BJQ}),
we can infer that
\begin{align*}
 n! \, \|\widetilde{f}_n(\cdot,t,x)\|_{\cH^{\otimes n}}^2
& \geq \eta^2\lambda^{2n} C^n \int_{T_n(t)}
\prod_{j=1}^n (t_{j+1}-t_j)^{2H} \, dt_1 \ldots dt_n \\
& = \eta^2\lambda^{2n} C^n  \frac{\Gamma(2H+1)^n}{\Gamma(2Hn+n+1)}\,  t^{(2H+1)n} \\
& = \eta^2\lambda^{2n} C^n \frac{t^{(2H+1)n}}{\Gamma((2H+1)n+1)}.
\end{align*}
Here we have applied Lemma 3.3 with $\beta_j=2H$ for all $j$.

At this point, we apply the following fact, which can be verified by applying Stirling's Formula:
\[
 \frac{\Gamma(an+1)}{ a^{an} n^{\frac{1-a}{2}}  (n!)^a}\leq C, \quad \text{for all} \;\; n\geq 0\;\; \text{and}\;\;
 a\in \bR_+.
\]
We apply this inequality with $a=2H+1$. Hence
\begin{align*}
  n! \, \|\widetilde{f}_n(\cdot,t,x)\|_{\cH^{\otimes n}}^2 &
  \geq \eta^2\lambda^{2n} C^n \frac{t^{(2H+1)n}}{(n!)^{2H+1} (2H+1)^{(2H+1)n} n^{-H}} \\
  & = \eta^2\lambda^{2n} C^n \, \frac{n^H t^{(2H+1)n}}{(n!)^{2H+1}} \\
  & \geq \eta^2\lambda^{2n} C^n \, \frac{t^{(2H+1)n}}{(n!)^{2H+1}},
\end{align*}
because $n^H\geq 1$ for all $n\geq 1$.

Therefore
\begin{align*}
 \sum_{n=0}^\infty n! \, \|\widetilde{f}_n(\cdot,t,x)\|_{\cH^{\otimes n}}^2
 & = \eta^2 + \sum_{n=1}^\infty n! \, \|\widetilde{f}_n(\cdot,t,x)\|_{\cH^{\otimes n}}^2 \\
 & \geq \eta^2 \sum_{n=0}^\infty \lambda^{2n} C^n \, \frac{t^{(2H+1)n}}{(n!)^{2H+1}} \\
 & \geq C \exp(C \lambda^{\frac{2}{2H+1}}\, t),
\end{align*}
where the last inequality follows by Lemma \ref{lemmaB} below. This concludes the proof.
\end{proof}

\medskip

In the proof of the next lemma, we will apply the following inequality, which can be easily checked by induction:
for any sequence $(a_n)_{n \geq 0}$ of positive real numbers and for any $p \geq 1$, it holds
\begin{equation}
  \left(\sum^\infty_{n=0} a_n\right)^p \leq 2^{p-1}\sum^\infty_{n=0} \big(2^{p-1}\big)^n \, a_n^p.
  \label{eq:an}
\end{equation}

\smallskip

\begin{lemma}
\label{lemmaB}
For any $p>0$ and $x>0$,
$$\sum_{n\geq 0}\frac{x^n}{(n!)^p} \geq c_1 \exp(c_2 x^{1/p}),$$
where $c_1$ and $c_2$ are some positive constants which depend on $p$.
\end{lemma}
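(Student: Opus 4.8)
The plan is to prove the lower bound $\sum_{n\geq 0} x^n/(n!)^p \geq c_1 \exp(c_2 x^{1/p})$ by relating the series on the left to the exponential series $\sum_{m\geq 0} y^m/m!$ via a change of summation index. The heuristic is that $(n!)^p \approx (n^p/e^p)^n$ behaves, when $n \approx p^{-1} \log(\text{something})$, comparably to a factorial of $pn$; more precisely, the ``dominant'' term of $\sum_n x^n/(n!)^p$ occurs near $n_* \approx x^{1/p}$, and there $x^{n_*}/(n_*!)^p \approx e^{p n_*} = e^{p x^{1/p}}$. So I expect $c_2$ to be essentially $p$, up to constants.

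First I would reduce to a clean comparison with $e^{y}$. The cleanest route: apply inequality \eqref{eq:an} (with the roles reversed) or rather apply the power-$p$ version directly. Specifically, write $y = x^{1/p}$, so that $x^n = y^{pn}$, and compare $\sum_n y^{pn}/(n!)^p = \sum_n \big(y^n/n!\big)^p$ with $\big(\sum_n y^n/n!\big)^p = e^{py}$ — but this inequality goes the wrong way (the $\ell^p$ norm is smaller than the $\ell^1$ norm). To fix the direction, I would instead use \eqref{eq:an}: for $p \geq 1$,
\[
  e^{py} = \left(\sum_{n\geq 0}\frac{y^n}{n!}\right)^p \leq 2^{p-1}\sum_{n\geq 0}\big(2^{p-1}\big)^n \frac{y^{np}}{(n!)^p} = 2^{p-1}\sum_{n\geq 0}\frac{\big(2^{p-1}y^p\big)^n}{(n!)^p}.
\]
Setting $x = 2^{p-1} y^p$, i.e. $y = (x/2^{p-1})^{1/p} = 2^{(1-p)/p} x^{1/p}$, this reads $\sum_{n\geq 0} x^n/(n!)^p \geq 2^{1-p} \exp\big(p\, 2^{(1-p)/p} x^{1/p}\big)$, which is exactly the claimed bound with $c_1 = 2^{1-p}$ and $c_2 = p\, 2^{(1-p)/p}$. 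This handles $p \geq 1$.

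For $0 < p < 1$, the inequality is even easier and does not need \eqref{eq:an}: since $n! \leq n!$ trivially, and in fact $(n!)^p \leq n!$ for $p\leq 1$, we get $\sum_{n\geq 0} x^n/(n!)^p \geq \sum_{n\geq 0} x^n/n! = e^x \geq e^{x^{1/p}}$ whenever $x \geq 1$ (since $1/p \geq 1$); for $0 < x < 1$ the sum is at least its $n=0$ term, namely $1 \geq e^{-c_2} \cdot e^{c_2}$, and one absorbs the bounded range into the constant $c_1$. Actually it is cleaner to just note $\sum_{n} x^n/(n!)^p \geq 1 + x$ always, which dominates $c_1 e^{c_2 x^{1/p}}$ on any bounded interval $[0, R]$ for suitable $c_1$, and on $[R,\infty)$ one uses the $e^x \geq e^{x^{1/p}}$ comparison. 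I would merge the two ranges with a single constant.

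I do not anticipate a genuine obstacle here — the lemma is elementary. The only mild subtlety is getting the constants uniform across the whole range $x \in (0,\infty)$ rather than just for large $x$, which is why I would phrase the small-$x$ estimate using the crude bound $\sum_n x^n/(n!)^p \geq 1$ (or $\geq 1+x$) and choose $c_1$ small enough that $c_1 e^{c_2 x^{1/p}} \leq 1$ for $x$ below whatever threshold the large-$x$ argument needs. The choice between invoking \eqref{eq:an} versus a direct Stirling-type estimate near the maximal term $n_* \approx x^{1/p}$ is a matter of taste; the \eqref{eq:an} route is shorter and self-contained given that \eqref{eq:an} is already stated just above the lemma, so that is the one I would write up.
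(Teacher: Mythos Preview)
Your argument for $p \geq 1$ is correct and is exactly the paper's argument, just organized in reverse (you apply \eqref{eq:an} to $a_n = y^n/n!$ and then substitute $x = 2^{p-1}y^p$; the paper starts from $\sum x^n/(n!)^p$ and recognizes the same identity). The constants $c_1 = 2^{1-p}$, $c_2 = p\,2^{(1-p)/p}$ match.

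Your argument for $0<p<1$ has a genuine gap. You claim $e^x \geq e^{x^{1/p}}$ for $x \geq 1$, but since $1/p > 1$ you have $x^{1/p} \geq x$ there, so the inequality goes the other way. This is not a fixable sign slip: the bound $\sum_n x^n/(n!)^p \geq e^x$ (from $(n!)^p \leq n!$) is simply too weak, because for $p<1$ the target $e^{c_2 x^{1/p}}$ grows \emph{faster} than $e^x$. No splitting into $[0,R]$ and $[R,\infty)$ can repair this, since the deficit occurs precisely at infinity. The paper handles $p<1$ with the one-line $\ell^p$--$\ell^1$ inequality $(\sum a_n)^p \leq \sum a_n^p$ (valid for $0<p<1$), applied to $a_n = x^{n/p}/n!$:
\[
\sum_{n\geq 0}\frac{x^n}{(n!)^p}=\sum_{n\geq 0}\Big(\frac{x^{n/p}}{n!}\Big)^{p}\;\geq\;\Big(\sum_{n\geq 0}\frac{x^{n/p}}{n!}\Big)^p=\exp\big(p\,x^{1/p}\big),
\]
which gives $c_1=1$, $c_2=p$ for all $x>0$ at once.
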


\begin{proof} {\em Case 1: $p<1$}. We use the fact that $(\sum_{n \geq 0}a_n)^p \leq \sum_{n \geq 0}a_n^p$, for any
positive real numbers $(a_n)_{n \geq 0}$. Hence,
$$\sum_{n\geq 0}\frac{x^n}{(n!)^p}=\sum_{n\geq 0}\left(\frac{x^{n/p}}{n!}\right)^{p}\geq \left(\sum_{n\geq 0}\frac{x^{n/p}}{n!}\right)^p=\exp(p\,x^{1/p}).$$

{\em Case 2: $p \geq 1$}. Using \eqref{eq:an}, we obtain:
\begin{eqnarray*}
\sum_{n\geq 0}\frac{x^n}{(n!)^p}& = &\sum_{n\geq 0}2^{(p-1)n}\left(\frac{2^{-(p-1)n/p}x^{n/p}}{n!}\right)^{p} \\
& \geq &  2^{1-p}\left( \sum_{n \geq 0} \frac{2^{-(p-1)n/p}x^{n/p}}{n!}\right)^p\\
&=& 2^{1-p} \exp(p \, 2^{-(p-1)/p}x^{1/p}).
\end{eqnarray*}
\end{proof}

\medskip

\begin{remark}\label{rmk:01}
 {\rm It is worth mentioning that the above prove also works for the case of the stochastic heat
 equation, i.e. the parabolic Anderson model, which would let us recover the same
 intermittency result of \cite{HHLNT}. The minor modifications only involve the formula
 for the Fourier transform of the underlying fundamental solution.
 Indeed, in this case we have
 \[
  G(t,x)=\frac{1}{\sqrt{2\pi t}}e^{-\frac{|x|^2}{2t}}, \; t>0, \, x\in \bR
  \qquad \text{and} \qquad \cF G(t,\cdot)(\xi)=e^{-t |\xi|^2/2}, \, \xi\in \bR.
 \]

 }
\end{remark}

\medskip

%%%%%%%%%%%%%%%%%%%%%%%%%%%%%%%%%

\appendix

\section{Proof of Theorem \ref{thm:Skorohod=Ito}}
\label{appendix}

The proof of this theorem  is based on two auxiliary lemmas. First
of all, we observe that for any Borel set  $A$ of $\bR_{+}$, any
Borel set $B\subset A^c$ and any $f,g\in\mathcal H$, the random
variables $I_1(f\mathbf{1}_A)$ and $I_1(g\mathbf{1}_B)$ are
independent. Indeed, the random vector $\big(I_1(f\mathbf{1}_A), \,
I_1(g\mathbf{1}_B)\big)$ is Gaussian and
\begin{align*}
E[I_1(f\mathbf{1}_A) I_1(g\mathbf{1}_B)] &
=\langle f\mathbf{1}_A,g\mathbf{1}_B\rangle_{\mathcal H} \\
& = \int_0^{\infty} \int_{\bR}
\mathcal{F}f(t,\cdot)(\xi) \mathbf{1}_A(t) \overline{\mathcal
Fg(t,\cdot)(\xi)} \mathbf{1}_B(t) \mu(d\xi) dt=0.
\end{align*}

On the other hand, for any $A\in \mathcal{B}(\bR_{+})$, we define
the $\sigma$-field
$$\mathcal F_A=\sigma\{X(1_{C} \varphi);\, C\in\mathcal
B_0\,, C\subset A,\, \varphi \in \cD(\bR)\}\vee \mathcal N,$$ where $\mathcal N$
are the null sets of $\mathfrak{F}$ and $\mathcal{B}_0$ are the
bounded Borel sets of $\bR_+$.

The following result is the analogue of Lemma 1.2.5 of \cite{nualart06} for the noise $X$.

\begin{lemma}
\label{lem:a1}
Let $F\in L^2(\Omega)$ with Wiener chaos decomposition
$F=\sum_{n=0}^\infty I_n(f_n)$ for some symmetric functions $f_n \in \cH^{\otimes n}$, and let $A\in \mathcal B(\bR_+)$.
Then
$$E[F|\mathcal F_A]=\sum_{n=0}^\infty I_n(f_n\mathbf{1}_A^{\otimes
n}).$$
\end{lemma}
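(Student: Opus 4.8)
The plan is to reduce the identity to a statement about a single Wiener chaos, and then verify it there by density. First I would observe that both sides of the claimed equality are continuous linear operations on $L^2(\Omega)$: the conditional expectation $E[\cdot \mid \cF_A]$ is a contraction, and the map $\sum_n I_n(f_n) \mapsto \sum_n I_n(f_n \mathbf 1_A^{\otimes n})$ is bounded since $\|f_n \mathbf 1_A^{\otimes n}\|_{\cH^{\otimes n}} \le \|f_n\|_{\cH^{\otimes n}}$ (multiplication by an indicator is a contraction on $\cH$, hence on $\cH^{\otimes n}$), so in fact it is an orthogonal projection. Consequently it suffices to prove the identity on each fixed chaos $\cH_n$, i.e.\ to show $E[I_n(f_n)\mid \cF_A] = I_n(f_n \mathbf 1_A^{\otimes n})$ for symmetric $f_n \in \cH^{\otimes n}$, and by linearity and continuity it is enough to check it on a dense subclass of such kernels.

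For the dense subclass I would take (symmetrizations of) elementary tensors $f_n = \varphi^{\otimes n}$ — or more precisely products $\varphi_1 \otimes \cdots \otimes \varphi_n$ with $\varphi_i \in \cD(\bR_+ \times \bR)$ — whose linear span is dense in $\cH^{\otimes n}$ by construction of $\cH$. The key computational input is the standard Hermite-polynomial representation of multiple integrals of tensor powers: for $\|\varphi\|_{\cH} = 1$ one has $I_n(\varphi^{\otimes n}) = H_n(X(\varphi))$, where $H_n$ is the $n$-th Hermite polynomial. Using this, the general case $f_n = \bigotimes_i \varphi_i$ can be expanded into products of such Hermite terms via the multiplication/product formula, so it is enough to understand $E[H_n(X(\varphi)) \mid \cF_A]$. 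Now decompose $\varphi = \varphi \mathbf 1_A + \varphi \mathbf 1_{A^c}$ (meaning $\mathbf 1_A(t)\varphi(t,x)$, etc.); then $X(\varphi) = X(\varphi\mathbf 1_A) + X(\varphi\mathbf 1_{A^c})$, the first summand is $\cF_A$-measurable, and — by exactly the orthogonality computation carried out in the lines just before the lemma — the second summand is independent of $\cF_A$ and centered Gaussian. Writing $a = \|\varphi \mathbf 1_A\|_\cH^2$ (so $1-a = \|\varphi\mathbf 1_{A^c}\|_\cH^2$), the classical identity $E[H_n(Y+Z)] = H_n(Y)$ when $Z$ is independent of $Y$ with the "right" variance — more precisely the reproducing identity $E[H_n(Y + Z)] = \sum_{k} \binom{n}{k}(\text{Var}\,Z)^{(n-k)/2}(\cdots)$, which collapses because $H_n(\sqrt a\,\cdot)$ expanded and reintegrated against the Gaussian $Z$ reproduces $I_n((\varphi\mathbf 1_A)^{\otimes n})$ — gives $E[I_n(\varphi^{\otimes n}) \mid \cF_A] = I_n((\varphi \mathbf 1_A)^{\otimes n}) = I_n(\varphi^{\otimes n}\mathbf 1_A^{\otimes n})$, which is the desired formula on this kernel.

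Finally I would assemble the pieces: the identity holds on the dense span of elementary tensors within each $\cH_n$; both sides are $L^2(\Omega)$-continuous in the kernel; hence it holds for all symmetric $f_n \in \cH^{\otimes n}$; and then summing over $n$ using the $L^2(\Omega)$-continuity of both $E[\cdot\mid\cF_A]$ and the projection $f_n \mapsto f_n\mathbf 1_A^{\otimes n}$ yields the general statement. The main obstacle I anticipate is purely bookkeeping rather than conceptual: one must make sure that the passage "$\varphi\mathbf 1_A$" genuinely makes sense as an element of $\cH$ and that $\cH^{\otimes n}$ really does contain the span of smooth tensors densely — for the noise $X$ here $\cH$ is a nonstandard function space (spatial correlation rougher than a locally integrable measure), so one should invoke the fact, recorded in the paper after the definition of $\cH$, that $\cH$ is a genuine space of functions in $(t,x)$ and that $\mathbf 1_A \cdot (\cdot)$ acts as a contraction on it, together with the independence computation already displayed in the excerpt. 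Once those structural facts are in hand, the proof is the verbatim analogue of the Brownian case (Lemma 1.2.5 of \cite{nualart06}), with $X(\varphi)$ playing the role of the Wiener integral.
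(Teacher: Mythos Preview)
Your reduction to tensor powers $\varphi^{\otimes n}$ and the use of the independence of $X(\varphi\mathbf 1_A)$ and $X(\varphi\mathbf 1_{A^c})$ is exactly how the paper begins. The execution differs: the paper does not invoke the Hermite representation $I_n(\varphi^{\otimes n})=H_n(X(\varphi))$ and a Gaussian conditioning identity directly, but instead carries out an explicit induction on $n$ using the recursion $I_{n+1}(g^{\otimes(n+1)})=I_n(g^{\otimes n})I_1(g)-n\|g\|_\cH^2 I_{n-1}(g^{\otimes(n-1)})$, expanding $I_n(f^{\otimes n}(1-\mathbf 1_A^{\otimes n}))$ combinatorially and checking term by term that all cross terms vanish. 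Of course that recursion \emph{is} the Hermite three-term recurrence, so the two arguments are really the same computation packaged differently: yours is shorter once one has the identity $E[H_n(Y+Z)\mid Y]=a^{n/2}H_n(Y/\sqrt a)$ in hand (which is what your ``classical identity'' should actually read --- your first version $E[H_n(Y+Z)]=H_n(Y)$ is not literally correct, as you half-acknowledge), whereas the paper's induction is self-contained and avoids having to state and justify that Hermite formula separately. Your concern about $\varphi\mathbf 1_A\in\cH$ is legitimate but harmless here, since the $\cH$-norm is a $dt$-integral in the time variable and multiplication by $\mathbf 1_A(t)$ is trivially a contraction; the paper uses this implicitly as well.
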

\begin{proof}
Since the conditional expectation is $L^2(\Omega)$-continuous and
linear, it suffices to consider $F=I_n(f^{\otimes n})$, with $f\in
\mathcal H$ and $\|f\|_{\mathcal H}=1$. We denote by $f^{\otimes n}$ the function defined by:
$f^{\otimes n}(t_1,x_1, \ldots,t_n,x_n)=f(t_1,x_1) \ldots f(t_n,x_n)$.

We will proceed by induction
on $n$. For $n=1$, we have
$$E[I_1(f)\conda]=E[I_1(f\ia)+I_1(f\iac)\conda]=I_1(f\ia),$$
since $I_1(f \ia)$ is $\cF_A$-measurable and $I_1(f \iac)$ is independent of $\cF_A$.

Suppose that the assertion is true up to some $n \geq 2$ and let us check
its validity for $n+1$. We will use the following recurrent formula
for the multiple It\^{o}-Wiener integrals (see the proof of Proposition 1.1.4 of
\cite{nualart06}):
\begin{equation}
\label{recu}
I_{n+1}(g^{\otimes (n+1)})=I_n(g^{\otimes n})I_1(g)-n\,
\|g\|_{\mathcal H}^2\, I_{n-1}(g^{\otimes (n-1)}),
\end{equation}
for any $g\in\mathcal H$. Applying (\ref{recu}) to $g=f$, we have
\begin{align}
\nonumber
E[I_{n+1}(f^{\otimes (n+1)})\conda] & =E[I_n(f^{\otimes
n})I_1(f)\conda]-n E[I_{n-1}(f^{\otimes
(n-1)})\conda]\\
\label{step1}
& =E[I_n(f^{\otimes n})I_1(f)\conda]-n I_{n-1}(f^{\otimes
(n-1)}\ia^{\otimes(n-1)}),
\end{align}
where we have applied the induction hypothesis to $I_{n-1}$.
%To compute $E[I_n(f^{\otimes n})I_1(f)\conda]$,
We write
\begin{equation}
\label{step2}
E[I_n(f^{\otimes n})I_1(f)\conda]=E[I_n(f^{\otimes n})I_1(f \ia)\conda]+E[I_n(f^{\otimes n})I_1(f \iac)\conda].
\end{equation}

Using the fact that $I_1(f\ia)$ is $\fa$-measurable and the
induction hypothesis, we have
\begin{equation}
\label{step3}
E[I_n(f^{\otimes
n})I_1(f\ia)\conda]=I_1(f\ia)I_n(f^{\otimes n}\ia^{\otimes n}),
\end{equation}
To deal with $E[I_n(f^{\otimes
n})I_1(f\iac)\conda]$, we write
$$I_n(f^{\otimes n})=I_n(f^{\otimes n}\ia^{\otimes n})+I_n(f^{\otimes n}(1-\ia^{\otimes
n})).$$
Note that
$$E[I_n(f^{\otimes
n}\ia^{\otimes n})I_1(f\iac)\conda]=I_n(f^{\otimes
n}\ia^{\otimes n})E[I_1(f \iac) \conda]=0,$$
since $I_n(f^{\otimes
n}\ia^{\otimes n})$ is $\cF_A$-measurable and $I_{1}(f \iac)$ is independent of $\cF_A$ with zero mean. Hence,
\begin{equation}
\label{step4}
E[I_n(f^{\otimes
n})I_1(f\iac)\conda]=E[I_n(f^{\otimes n}(1-\ia^{\otimes
n})) I_1(f \iac)\conda].
\end{equation}

We claim that:
\begin{equation}
\label{In-identity}
I_n(f^{\otimes n}(1-\ia^{\otimes
n}))=\sum_{k=1}^n {n\choose k} I_k(f^{\otimes k}\iac^{\otimes
k})I_{n-k}(f^{\otimes(n-k)}\ia^{\otimes(n-k)}).
\end{equation}
%because $f\ia$ and $f\iac$ are orthogonal in $\mathcal H$.
To see this, we note that by \eqref{prod-ab}, we have
\begin{eqnarray*}
f^{\otimes n}(t_1,x_1,\ldots,t_n,x_n)&=&\prod_{j=1}^{n}[f(t_j,x_j)\iac(t_j)+f(t_j,x_j)\ia(t_j)]\\
&=& \sum_{J \subset \{1,\ldots,n\}} \left[\prod_{j \in J} f(t_j,x_j)\iac(t_j)\right] \cdot \left[\prod_{j \in J^c}
f(t_j,x_j)\ia(t_j)\right].
\end{eqnarray*}
Subtracting from this
$(f^{\otimes n}1_{A}^{\otimes n}) (t_1,x_1,\ldots,t_n,x_n) =\prod_{j=1}^{n}f(t_j,x_j)\ia(t_j)$,
we are left with the sum over all subsets $J \subset \{1,\ldots,n\}$, except $J=\emptyset$.
Relation \eqref{In-identity} follows by integrating with respect to $X(dt_1,dx_1)\ldots X(dt_n,dx_n)$ and
denoting ${\rm card}(J)=k$.

We multiply \eqref{In-identity} by $I_1(f \iac)$ and we take the conditional expectation with respect to $\cF_A$. We obtain:
\begin{align*}
& E[I_n(f^{\otimes n}(1-\ia^{\otimes
n}))I_1(f \iac)|\cF_A] \\
& \qquad \quad =\sum_{k=1}^n {n\choose k} E[I_k(f^{\otimes k}\iac^{\otimes
k})I_{n-k}(f^{\otimes(n-k)}\ia^{\otimes(n-k)}) I_1(f \iac)|\cF_A].
\end{align*}
We evaluate separately each term in the sum on the right-hand of the previous relation.
The term corresponding to $k=n$ is
$$E[ I_n(f\iac^{\otimes
n}))I_1(f\iac)\conda]=0,$$ because $I_n(f\iac^{\otimes n})I_1(f\iac)$ is
independent of $\fa$ and its expectation is equal to $0$ (due to \eqref{orthog-Hn}, since $n\geq 2$).
The term corresponding to $k=2,\ldots, n-1$ is
$${n\choose k} E[I_k(f^{\otimes k}\iac^{\otimes
k})I_{n-k}(f^{\otimes(n-k)}\ia^{\otimes(n-k)})I_1(f\iac)\conda]=0,$$ because
$I_{n-k}(f^{\otimes(n-k)}\ia^{\otimes(n-k)})$ is $\fa$-measurable
and $I_1(f\iac)I_k(f^{\otimes k}\iac^{\otimes k})$ is independent of
$\fa$ with null expectation. The term corresponding to $k=1$ is
 $$n\,E[I_{n-1}(f^{\otimes(n-1)}\ia^{\otimes(n-1)})\big(I_1(f\iac)\big)^2\conda]=n\,\|f\iac\|_{\mathcal
 H}^2\,I_{n-1}(f^{\otimes(n-1)}\ia^{\otimes(n-1)}).$$
In summary,
\begin{equation}
\label{step5}
E[I_n(f^{\otimes n}(1-\ia^{\otimes
n}))I_1(f \iac)|\cF_A]=n\,\|f\iac\|_{\mathcal
 H}^2\,I_{n-1}(f^{\otimes(n-1)}\ia^{\otimes(n-1)}).
\end{equation}

Combining \eqref{step1}, \eqref{step2}, \eqref{step3}, \eqref{step4} and \eqref{step5}, we obtain:
\begin{align*}
 & E[I_{n+1}(f^{\otimes(n+1)})\conda] \\
 & \qquad =I_1(f\ia)I_n(f^{\otimes
 n}\ia^{\otimes n})+ n\,\|f\iac\|_{\mathcal
 H}^2\,I_{n-1}(f^{\otimes(n-1)}\ia^{\otimes(n-1)})-n I_{n-1}(f^{\otimes
 (n-1)}\ia^{\otimes (n-1)}) \\
 &  \qquad =I_1(f\ia)I_n(f^{\otimes  n}\ia^{\otimes n})- n\,\|f\ia\|_{\mathcal
 H}^2\,I_{n-1}(f^{\otimes(n-1)}\ia^{\otimes(n-1)})\\
 & \qquad =I_{n+1}(f^{\otimes(n+1)}\ia^{\otimes(n+1)}),
\end{align*}
where for the second equality we used the fact that $f\ia$ and $f\iac$ are orthogonal in $\mathcal H$ with
$\|f\ia\|_{\mathcal H}^2+\|f\iac\|_{\mathcal H}^2=\|f\|_{\mathcal H}^2=1$,
and for the last equality we used \eqref{recu} with $g=f \ia$.
This concludes the proof.
\end{proof}

\smallskip

We recall that a random variable $F\in L^2(\Omega)$ with the Wiener chaos expansion $F=\sum_{n \geq 0}I_n(f_n)$ for some symmetric functions $f_n \in \cH^{\otimes n}$ is Malliavin differentiable (i.e. $F \in \bD^{1,2}$) if and only if
\begin{equation}
\label{Mall-sum}
\sum_{n \geq 1}n n! \|f_n\|_{\cH^{\otimes n}}^2<\infty
\end{equation}
(see, e.g., Proposition 1.2.2 of \cite{nualart06}).
In this case,
\begin{equation}
\label{Mall-F}
D_{t,x}F=\sum_{n=1}^{\infty} n\,I_{n-1}(f_n(\cdot,t,x))
\end{equation}
(see Exercise 1.2.5 of \cite{nualart06} for the white noise case).
We also remind that $DF$ takes values in $\cH$, which is a space of functions.
The next result is the analogue of Proposition 1.2.8 of \cite{nualart06} for the noise $X$;
see also Proposition 3.12 of \cite{DOP09}.

\medskip

\begin{lemma}\label{lem:a2}
Let $F\in\mathbb D^{1,2}$ and $A\in \mathcal{B}(\bR_+)$. Then $E[F\conda]\in\mathbb D^{1,2}$ and
$$D(E[F\conda])=\ia E[DF\conda].$$
\end{lemma}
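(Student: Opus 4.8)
The plan is to reduce the statement to an identity at the level of Wiener chaos kernels, using the chaos expansion of $F$ together with Lemma \ref{lem:a1} and formula \eqref{Mall-F}. First I would write $F=\sum_{n\geq 0}I_n(f_n)$ with symmetric $f_n\in\cH^{\otimes n}$, so that by \eqref{Mall-F} we have $D_{\cdot}F=\sum_{n\geq 1}n\,I_{n-1}(f_n(\cdot))$, viewed as an $\cH$-valued random variable. Applying Lemma \ref{lem:a1} to each coordinate $D_{t,x}F$ (more precisely, to the $\cH$-valued variable $DF$, using that conditional expectation commutes with the $\cH$-valued integral since $\cH$ is a space of functions), one gets
\[
E[DF\mid\fa]=\sum_{n\geq 1}n\,I_{n-1}\big(f_n(\cdot)\,\ia^{\otimes(n-1)}\big).
\]
On the other hand, Lemma \ref{lem:a1} directly gives $E[F\mid\fa]=\sum_{n\geq 0}I_n(f_n\,\ia^{\otimes n})$; since $f_n\,\ia^{\otimes n}$ is still symmetric, \eqref{Mall-F} applies to it and yields
\[
D_{t,x}\big(E[F\mid\fa]\big)=\sum_{n\geq 1}n\,I_{n-1}\big((f_n\,\ia^{\otimes n})(\cdot,t,x)\big)
=\ia(t)\sum_{n\geq 1}n\,I_{n-1}\big(f_n(\cdot,t,x)\,\ia^{\otimes(n-1)}\big),
\]
where in the last step I factor out the indicator in the $(t,x)$ slot. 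Comparing the two displays gives exactly $D(E[F\mid\fa])=\ia\,E[DF\mid\fa]$.

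Before this computation is legitimate, I need to check the membership claim $E[F\mid\fa]\in\bD^{1,2}$, which amounts to verifying the summability criterion \eqref{Mall-sum} for the kernels $f_n\,\ia^{\otimes n}$. This is where I would be slightly careful: one must show $\sum_{n\geq 1}n\,n!\,\|f_n\,\ia^{\otimes n}\|_{\cH^{\otimes n}}^2<\infty$. The natural approach is to prove the pointwise (in the tensor norm) domination $\|f_n\,\ia^{\otimes n}\|_{\cH^{\otimes n}}\leq \|f_n\|_{\cH^{\otimes n}}$, i.e. that multiplication by the indicator of a set in the time variable is a contraction on $\cH^{\otimes n}$; the conclusion then follows from $F\in\bD^{1,2}$. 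Since the $\cH$-norm is defined through the Fourier transform in space with a time integral, and multiplication by $\ia(t)$ in each time slot simply restricts the domain of integration in $t$, this contraction property should follow from the explicit form of $\|\cdot\|_{\cH^{\otimes n}}$ in \eqref{def-norm-Hn} — essentially $\ia(t_j)$ pulls out as $\ia(t_j)^2=\ia(t_j)\le 1$ inside a positive integrand. I expect this to be the only genuinely non-formal point; once the contraction is established, the membership is immediate and the kernel identity above goes through.

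The main obstacle, then, is not conceptual but a matter of justifying the interchange of conditional expectation with the $\cH$-valued series and with the "pointwise in $(t,x)$" evaluation implicit in \eqref{Mall-F}; here I would lean on the $L^2(\Omega;\cH)$-continuity and linearity of $F\mapsto E[F\mid\fa]$, together with the orthogonality of the Wiener chaoses, to pass to the limit term by term. In short: the proof is \emph{(i)} establish that $\varphi\mapsto\ia^{\otimes n}\varphi$ is a contraction on $\cH^{\otimes n}$, hence $E[F\mid\fa]\in\bD^{1,2}$ via \eqref{Mall-sum}; \emph{(ii)} expand both sides in chaos using Lemma \ref{lem:a1} and \eqref{Mall-F}; \emph{(iii)} factor the indicator out of the $(t,x)$-slot to match the two expressions. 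No step requires anything beyond the results already stated in the excerpt.
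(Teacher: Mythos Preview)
Your proposal is correct and follows essentially the same route as the paper: expand $F$ in chaos, apply Lemma~\ref{lem:a1} to obtain the chaos expansion of $E[F\conda]$, verify \eqref{Mall-sum} via the contraction $\|f_n\ia^{\otimes n}\|_{\cH^{\otimes n}}\le\|f_n\|_{\cH^{\otimes n}}$, then compute both sides using \eqref{Mall-F} and Lemma~\ref{lem:a1} and factor out $\ia(t)$. The paper's proof is terser but structurally identical; your added remarks on the contraction and on interchanging the conditional expectation with the $\cH$-valued series are exactly the justifications the paper leaves implicit.
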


\begin{proof}
Let $F=\sum_{n=0}^{\infty} I_n(f_n)$ be the Wiener chaos expansion of $F$, for some symmetric functions
$f_n \in \cH^{\otimes n}$.
By Lemma \ref{lem:a1}, $E[F|\mathcal F_A]=\sum_{n=0}^\infty I_n(f_n\mathbf{1}_A^{\otimes
n})$.
The fact that $E[F\conda]\in\mathbb D^{1,2}$ follows from the criterion for Malliavin differentiability stated above,
since
$$\sum_{n=1}^{\infty} n \,n!\|f_n\ia^{\otimes n}\|^2_{\mathcal H^{\otimes
n}}\le \sum_{n=1}^{\infty} n \,n!\|f_n\|^2_{\mathcal H^{\otimes
n}}<\infty.$$
Moreover,
$$D_{t,x} (E[F|\cF_A])=\sum_{n \geq 1}n I_{n-1}(f_n(\cdot,t,x) 1_{A}^{\otimes (n-1)}1_A(t))=
1_A(t)\sum_{n \geq 1}n I_{n-1}(f_n(\cdot,t,x) 1_{A}^{\otimes (n-1)}).$$

On the other hand, using \eqref{Mall-F} and the fact that the conditional expectation is linear and continuous, we have
$$E[D_{t,x}F\conda]=\sum_{n=1}^{\infty} n\,E[I_{n-1}(f_n(\cdot,t,x))\conda]=\sum_{n=1}^{\infty} n
\,I_{n-1}(f_n (\cdot,t,x)\ia^{\otimes (n-1)}),$$
where the last equality follows form Lemma \ref{lem:a1}.
\end{proof}

\medskip

%The proof of the next lemma follows by using a duality argument.
We recall the following results.

\begin{proposition}[Proposition 1.3.3 of \cite{nualart06}]
\label{intFu}
 Let $F\in\mathbb D^{1,2}$ and $u\in
\text{Dom}(\delta)$ such that  $F\,u\in L^2(\Omega;\mathcal H)$.
Then $Fu\in\text{Dom}(\delta)$ and the following equality holds
true:
$$\delta(Fu)=F\delta(u)-\langle DF,u\rangle_{\mathcal H},$$
provided that the right-hand side belongs to $L^2(\Omega)$.
\end{proposition}

\begin{proposition}[Proposition 1.3.6 of \cite{nualart06}]
\label{prop-1-3-6}
Let $u \in L^2(\Omega;\cH)$ and $(u_n)_{n \geq 1} \subset {\rm Dom}\ \delta$ such that
$E\|u_n-u\|_{\cH}^2 \to 0$. Suppose that there exists a random variable $G \in L^2(\Omega)$ such that
$$E(\delta(u_n)F) \to E(GF) \quad \mbox{for all} \ F \in \cS,$$
where $\cS$ is the class of smooth random variables of form \eqref{form-F}. Then $u \in {\rm Dom} \ \delta$ and $\delta(u)=G$.
\end{proposition}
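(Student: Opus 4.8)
The plan is to exploit the fact that the divergence operator $\delta$ is defined as the adjoint of the Malliavin derivative $D$, so that the desired statement is essentially a closedness property of this adjoint. The only ingredients needed are the duality relation \eqref{duality} together with the domain criterion appearing in the definition of ${\rm Dom}\ \delta$. First I would fix an arbitrary smooth random variable $F \in \cS \subset \bD^{1,2}$ and apply \eqref{duality} to each $u_n \in {\rm Dom}\ \delta$, obtaining
$$E(\delta(u_n)F)=E\langle DF,u_n\rangle_{\cH}, \quad n \geq 1.$$

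Next I would pass to the limit as $n \to \infty$ on both sides. The left-hand side converges to $E(GF)$ by hypothesis. For the right-hand side, observe that for $F \in \cS$ the derivative $DF$ is a bounded $\cH$-valued random variable, hence $DF \in L^2(\Omega;\cH)$; by the Cauchy--Schwarz inequality in $L^2(\Omega;\cH)$,
$$\bigl|E\langle DF,u_n-u\rangle_{\cH}\bigr| \leq \bigl(E\|DF\|_{\cH}^2\bigr)^{1/2}\bigl(E\|u_n-u\|_{\cH}^2\bigr)^{1/2} \to 0,$$
using the assumption $E\|u_n-u\|_{\cH}^2 \to 0$. This yields the identity $E(GF)=E\langle DF,u\rangle_{\cH}$ for every $F \in \cS$.

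It remains to upgrade this identity from $\cS$ to all of $\bD^{1,2}$ and then extract the two conclusions. Since $\cS$ is dense in $\bD^{1,2}$, for any $F \in \bD^{1,2}$ I would pick $F_k \in \cS$ with $F_k \to F$ in $\bD^{1,2}$, that is $F_k \to F$ in $L^2(\Omega)$ and $DF_k \to DF$ in $L^2(\Omega;\cH)$; passing to the limit in $E(GF_k)=E\langle DF_k,u\rangle_{\cH}$ (the left side because $G \in L^2(\Omega)$, the right side because $u \in L^2(\Omega;\cH)$) gives
$$E(GF)=E\langle DF,u\rangle_{\cH}, \quad \forall F \in \bD^{1,2}.$$
From here Cauchy--Schwarz yields $|E\langle DF,u\rangle_{\cH}|=|E(GF)| \leq (E|G|^2)^{1/2}(E|F|^2)^{1/2}$, which is exactly the domain criterion with constant $c=(E|G|^2)^{1/2}$, so $u \in {\rm Dom}\ \delta$. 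Finally, comparing the last display with the defining relation \eqref{duality} for $\delta(u)$ shows $E(F\delta(u))=E(FG)$ for all $F \in \bD^{1,2}$, and since $\bD^{1,2}$ is dense in $L^2(\Omega)$ this forces $\delta(u)=G$ almost surely.

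The argument is essentially routine functional analysis; the only delicate point is that the hypothesis supplies the convergence $E(\delta(u_n)F)\to E(GF)$ \emph{only} for test variables $F \in \cS$, whereas membership in ${\rm Dom}\ \delta$ demands the bound against $(E|F|^2)^{1/2}$ for \emph{every} $F \in \bD^{1,2}$. Bridging this gap is precisely the role of the density of $\cS$ in $\bD^{1,2}$ combined with the $L^2$-continuity of $D$ and of the pairings involved. Getting this two-stage limiting procedure right --- first in $n$ for fixed $F \in \cS$, then in the $\bD^{1,2}$-approximation of a general $F$ --- is the main thing one must be careful about.
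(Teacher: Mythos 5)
Your proof is correct and is essentially the canonical argument for this result: the paper does not prove the proposition itself but cites it as Proposition 1.3.6 of \cite{nualart06}, whose proof proceeds exactly as yours does --- duality \eqref{duality} applied to each $u_n$, passage to the limit against a fixed $F \in \cS$, extension of the identity $E(GF)=E\langle DF,u\rangle_{\cH}$ to all of $\bD^{1,2}$ by density, the Cauchy--Schwarz bound giving the domain criterion, and identification $\delta(u)=G$ by density in $L^2(\Omega,\cG,P)$. There is nothing to correct.
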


\medskip

Now we are in position to prove Theorem \ref{thm:Skorohod=Ito}.

\smallskip

\noindent{\it Proof of  Theorem \ref{thm:Skorohod=Ito}.}
{\em Case 1.} {\em $u$ is a linear combination of elementary processes}.

By linearity, it is enough to assume that $u=g$, where $g$ is an elementary process of
form \eqref{eq:34}. The It\^{o}-type
integral of $g$ is given by
$$(g\cdot X)_t :=Y[X_{t\wedge b}\big((v,w]\big)-X_{t\wedge
a}\big((v,w]\big)].$$

Fix $t>0$. We will prove that $g\mathbf{1}_{[0,t]}\in
\text{Dom}(\delta)$ and
\begin{equation*}
\delta(g\mathbf{1}_{[0,t]})=(g\cdot X)_t.
\end{equation*}

Without loss of generality, we assume that $Y \in \mathbb D^{1,2}$.
(To see this, note that since $\bD^{1,2}$ is dense in $L^2(\Omega)$, there exists a sequence $(Y_n)_{n \geq 1} \subset \bD^{1,2}$ such that $E|Y_n-Y|^2 \to 0$. Define $g_n(t,x)=Y_n 1_{(a,b]}(t) 1_{(v,w]}(x)$. Then $g_n 1_{[0,t]} \in {\rm Dom} \ \delta $ and $\delta(g_n 1_{[0,t]})=(g_n \cdot X)_t$ for any $n \geq 1$. We apply Proposition \ref{prop-1-3-6} with $u=g 1_{[0,t]}$ and $u_n=g_n 1_{[0,t]}$.)

\smallskip

Observe that $(g\mathbf{1}_{[0,t]})(s,x)=Y\,\mathbf{1}_{(a\wedge
t,b\wedge t]}(s)\mathbf{1}_{(v,w]}(x)$.  Using Proposition \ref{intFu}
with $u=\mathbf{1}_{(a\wedge t,b\wedge t]}\mathbf{1}_{(v,w]}\in
\text{Dom}(\delta)$ and $F=Y$, we obtain that
$g\mathbf{1}_{[0,t]}\in \text{Dom}(\delta)$ and
$$\delta (g\mathbf{1}_{[0,t]})=Y\delta(\mathbf{1}_{(a\wedge
t,b\wedge t]}\mathbf{1}_{(v,w]})-\langle DY,\mathbf{1}_{(a\wedge
t,b\wedge t]}\mathbf{1}_{(v,w]}\rangle_{\mathcal H}=Y\,
I_1(\mathbf{1}_{(a\wedge t,b\wedge t]}\mathbf{1}_{(v,w]}),$$ due to
the fact that
\begin{align*}
\langle DY,\mathbf{1}_{(a\wedge t,b\wedge
t]}\mathbf{1}_{(v,w]}\rangle_{\mathcal H}&=\langle
D\big(E[Y|\mathcal F_a]\big),\mathbf{1}_{(a\wedge t,b\wedge
t]}\mathbf{1}_{(v,w]}\rangle_{\mathcal H}\\&=\langle
\mathbf{1}_{[0,a]} E[DY|\mathcal F_a],\mathbf{1}_{(a\wedge t,b\wedge
t]}\mathbf{1}_{(v,w]}\rangle_{\mathcal H}=0,
\end{align*}
where for the second equality we used Lemma \ref{lem:a2} with $A=[0,a]$ and the $\cF_a=\cF_{[0,a]}$.
The result follows using that
$$I_1(\mathbf{1}_{(a\wedge t,b\wedge t]}\mathbf{1}_{(v,w]})=X_{t\wedge b}\big((v,w]\big)-X_{t\wedge
a}\big((v,w]\big).$$

{\em Case 2. General case.} Let $t>0$ be arbitrary. Fix $T \geq t$. Since $u 1_{[0,T]} \in {\cal P}_0^T$, there exists a sequence $(u_n)_{n \geq 1}$ of simple processes defined on $[0,T] \times \bR$ such that $$E\|u_n-u \mathbf{1}_{[0,T]}\|_{\cH}^2 \to 0.$$ By {\em Case 1}, $u_n \mathbf{1}_{[0,t]} \in {\rm Dom}\ \delta$ and $\delta(u_n \mathbf{1}_{[0,t]})=(u_n \cdot X)_t$. By the definition of the It\^o integral, $(u_n \cdot X)_t \to (u \cdot X)_t$ in $L^2(\Omega)$. By applying Proposition \ref{prop-1-3-6}, we infer that $u \mathbf{1}_{[0,t]} \in {\rm Dom}\ \delta$ and $\delta(u \mathbf{1}_{[0,t]})=(u \cdot X)_t$.

\qed

\end{document}